\documentclass[11pt]{article}
\usepackage{amsthm}
\usepackage{geometry}
\usepackage{graphicx}
\usepackage{amsmath}
\usepackage{amssymb}
\usepackage{physics}
\usepackage{tikz-cd}
\usepackage{quiver}
\usepackage{setspace}
\usepackage{xcolor}
\usepackage{hyperref}
\usepackage{listings}
\usepackage{appendix}
\usepackage{mathtools}
\usepackage{placeins}
\usepackage{microtype}
\usepackage{enumitem}
\usepackage{doi}
\usepackage{url}
\usepackage[square,numbers]{natbib}
\setlist[enumerate]{leftmargin=.5in}
\setlist[itemize]{leftmargin=.5in}

\newtheorem{prop}{Proposition}
\newtheorem{thm}{Theorem}
\newtheorem{lemma}{Lemma}
\theoremstyle{definition}
\newtheorem{defn}{Definition}
\newtheorem{rem}{Remark}

\newtheorem{exa}{Example}
\date{\today}

\author{Michael R. Astwood \footnote{
       Department of Mathematics,
       University of Manitoba,
       Winnipeg, MB, R3T 2M6 Canada
       ({michael.astwood@umanitoba.ca})
    }
}

\title{Theoretical Aspects of Lie Groupoid and Lie Algebroid Equivariant Convolutional Neural Networks}

\begin{document}
\maketitle

\begin{abstract}
    We introduce Lie groupoid equivariant neural networks as a specialization of recently proposed topological category-equivariant neural networks to the differentiable setting. Lie groupoid equivariant neural networks are composed from Lie groupoid lifting convolutions and Lie groupoid convolution layers, and we show how for suitable Lie groupoids they are equivalent to certain Lie algebroid-equivariant neural networks. We additionally describe groupoid invariant global pooling as a generalization of group invariant global pooling. Furthermore, we show that each of the aforementioned layers is a special case of recently introduced admissible category-equivariant layers by demonstrating that they define continuous natural transformations between continuous feature functors. 
\end{abstract}

\section{Introduction}

A recent trend in deep learning has been to incorporate {\em a priori} known structures and symmetries into the design of a neural network rather than through dataset augmentation in order to improve inductive bias. Such ideas have brought forward group-equivariant neural networks \cite{pmlr-v48-cohenc16,cohen2019gauge,finzi2020}, graph neural networks \cite{Scarselli2009}, and various related combinations thereof \cite{pmlr-v139-satorras21a}. Most recently, this viewpoint has been generalized to category-equivariant neural networks \cite{maruyama}, which include group and groupoid-equivariant neural networks, poset-equivariant neural networks, and message passing graph neural networks as special cases. Many of these ideas fall under the broad topic of geometric deep learning (see e.g. the draft book by Bronstein et al. \cite{bronstein} for a classic reference). We specialize Maruyama's categorical equivariant neural networks to a special type of groupoid, namely Lie groupoids, for which there exists an infinitesimal analogue called a Lie algebroid that allows for a convenient parametrization in the appropriate circumstances. The structure of our paper is as follows.
\begin{enumerate}
    \item In the present section we describe the general motivation behind Lie groupoid and Lie algebroid equivariance in machine learning, and discuss some related topics in the literature.
    \item In section 2 we introduce Lie groupoids and Lie algebroids mathematically and provide several examples which may be of use to readers interested in implementing these ideas in practice.
    \item In section 3 we define our Lie groupoid and Lie algebroid convolutional layers, LieGrpdLiftingConv, LieGrpdConv, LieAlgbdLiftingConv, and LieAlgbdConv, as well as a groupoid-invariant global pooling operation we refer to as GrpdIGP.
    \item In section 4 we recall Maruyama's construction of category-equivariant neural networks \cite{maruyama} and prove that our Lie groupoid convolution layers are a special case of Maruyama's category convolutions.
\end{enumerate}

Groupoids are a type of higher generalization of a group, where `multiplication' of elements may only be a partial function. A groupoid consists of a set of objects, and a set of invertible morphisms between those objects. In the case where these sets are discrete spaces, one can construct the graph consisting of morphisms as edges and objects as vertices. A groupoid is able to represent local and global symmetries in both the continuous and discrete settings, in contrast to groups which encode global symmetries and graphs which encode local discrete connective structures. Lie groupoids appear when modeling systems with partial or local symmetries, such as symmetric systems with constraints or systems with stratifications. In general, applications of groupoid-equivariant neural networks have only just begun to emerge in the literature, with much of the general theory having been introduced in the recent PhD thesis of P. de Haan \cite{deHaan}, and the theory of Hausdorff topological groupoid equivariant neural networks having been introduced in the work of Maruyama \cite{maruyama}. More broadly, groupoids have been applied to control systems engineering and optimal control theory \cite{Arsie, Colombo2016} and reinforcement learning \cite{opperman2026groupoid} and in studying datasets with the structure of a fibration \cite{velarde2026fibration}. They have also been applied to information geometry in describing symmetries of contrast functions \cite{Grabowska2019}. A major source of examples of Lie groupoid symmetries in the sciences is in physics, where the gauge groupoid of a field theory encodes both global and local gauge transformations (see e.g. \cite{brown2002theorie}). We expect that Lie groupoid equivariant neural networks will find applications in datasets with continuous fibration symmetries, in the learning of contrast functions on statistical manifolds, as well as in lattice gauge theory. Indeed, Maruyama's categorical neural networks have been applied to gauge theoretic problems such as $\textrm{SO}(3)$ lattice gauge theory on a torus \cite{maruyama2026categorical}. 

From a topological point of view, topological (and Lie) groupoids can be seen as special simplicial complexes, where the simplices are generated by composable tuples of morphisms. Simplicial ideas in graph deep learning have been introduced by Bodnar et al. \cite{bodnar2021weisfeiler}. Examples of recent work on simplicial neural networks include that of Tang et al \cite{tang2025deepscnn} and Einizade et al. \cite{einizade2026continuous}. Our Lie groupoid neural networks provide a step towards generalizing simplicial deep learning to the continuous setting. This approach is quite different from the continuous simplicial neural networks of Einizade et al \cite{einizade2026continuous}, where instead simplicial neural networks are generalized to continuous data using partial differential equations. A more grand proposal is to extend groupoid-equivariant neural networks to higher groupoids (called $n$-groupoids), which are simplicial complexes satisfying certain conditions called Kan conditions (see e.g. \cite{Zhu2009}), for which one might coin the term `higher geometric deep learning'. A starting point may be the work of Rom\'an and Villatoro on convolution algebras of double Lie groupoids \cite{Roman2024}. Moreover, Maruyama has also recently proposed a method for $\infty$-categorical neural networks which uses the $E_\infty$-algebraic model of higher category theory, rather than a simplicial model \cite{Maruyama2026infty}.
This will be the subject of future research. 

We may also relate our work to the prior advent of group-equivariant and graph neural networks. Group-equivariant neural networks were introduced by Cohen \cite{pmlr-v48-cohenc16}. Let $G$ be a discrete group acting transitively and freely on a set $X$. A $G$-equivariant network is a convolutional neural network, taking inputs features over $X$, where conventional convolutions are replaced by convolutions on the group $G$, and feature maps are first lifted to functions on $G$ using a so-called lifting convolution. This has been extended to arbitrary group actions, including non-free and non-transitive actions as well as Lie groups \cite{finzi2020}, by considering lifts of features to the set $G\times X/\!/G$ rather than $G$, where $X/\!/G$ is the orbit space of $X$. For Lie groups it is often convenient to parametrize functions on $G$ in terms of the Lie algebra of $G$ using a local exponential map. The groupoid-equivariant neural networks developed in this article reduce to ordinary group-equivariant neural networks when the groupoid has only one object, and our Lie algebroid parametrization reduces to the ordinary Lie algebra parametrizations of Finzi et al \cite{finzi2020} and Dehmamy et al \cite{NEURIPS2021_148148d6}.

Graph neural networks are a similar kind of convolutional network, first formalized by Scarselli \cite{Scarselli2009}. Hybrid group-equivariant graph neural networks were later developed, which account for group-equivariance of features attached to each vertex of a graph (see e.g. \cite{satorras2021egnn,cohen2019gauge,rayat2,rayat,Favoni2022}). Graph neural networks primarily work by attaching features as labels on the edges and/or vertices of a graph, and then using a convolution related to the connective structure of the graph. For instance, a simple message-passing graph neural network with one dimensional feature maps has a kernel given by a weighted adjacency matrix. Meanwhile, diffusion-based models consider a message-passing scheme related to random walks on the graph. Since a finite groupoid can be interpreted as an undirected multigraph, one can view many of the simplest groupoid-equivariant neural networks as a special kind of graph neural network.

\section{Mathematical Preliminaries}
In this section we introduce the mathematical foundations of Lie groupoid-equivariant neural networks. We begin with the definition of a groupoid.
\begin{defn}[Groupoid]
    A groupoid $G_\bullet = (G_1\rightrightarrows G_0)$ consists of a set $G_1$ of morphisms (`arrows'), a set $G_0$ of objects (`points'), and set functions called the structure maps:
    \begin{itemize}
    \item A source map $s: G_1\to G_0$. 
    \item A target map $t: G_1\to G_0$.
    \item A composition map $m: G_2 \to G_1$, where \[
    G_2 = G_1\times_{s,t} G_1 = \{(h,g)\in G_1\times G_1 : s(h)=t(g)\}
    \]
    We often write $m(h,g)=h\cdot g$.
    \item A unit map $u:G_0\to G_1$, where $u(p)\cdot g=g$ and $h\cdot u(x)=h$ for all $g,h \in G_1, p\in G_0$ for which composition with $u(p)$ is defined. We often write $u(p)=1_p$
        \item An inverse map $i : G_1\to G_1$ with $i(g)\cdot g=u(s(g))$ and $g\cdot i(g)=u(t(g))$. We often write $i(g)=g^{-1}$.
    \end{itemize}
    The two arrows in the notation $G_1\rightrightarrows G_0$ correspond to the source and target map respectively. In other words, a groupoid is a small category where every morphism is invertible.
\end{defn}

\begin{defn}[Lie Groupoid]
    A Lie groupoid is a groupoid $G_\bullet$ where $G_0$ and $G_1$ are smooth manifolds, all structure maps are smooth, and $s$ and $t$ are surjective submersions. In particular, it is useful to visualize the unit map $u$ as a smooth embedding of $G_0$ into $G_1$. A diagram of this embedding is shown in Figure \ref{fig:groupoid}.
\end{defn}
\FloatBarrier
\begin{figure}[h]
    \centering
    \includegraphics[width=0.5\linewidth]{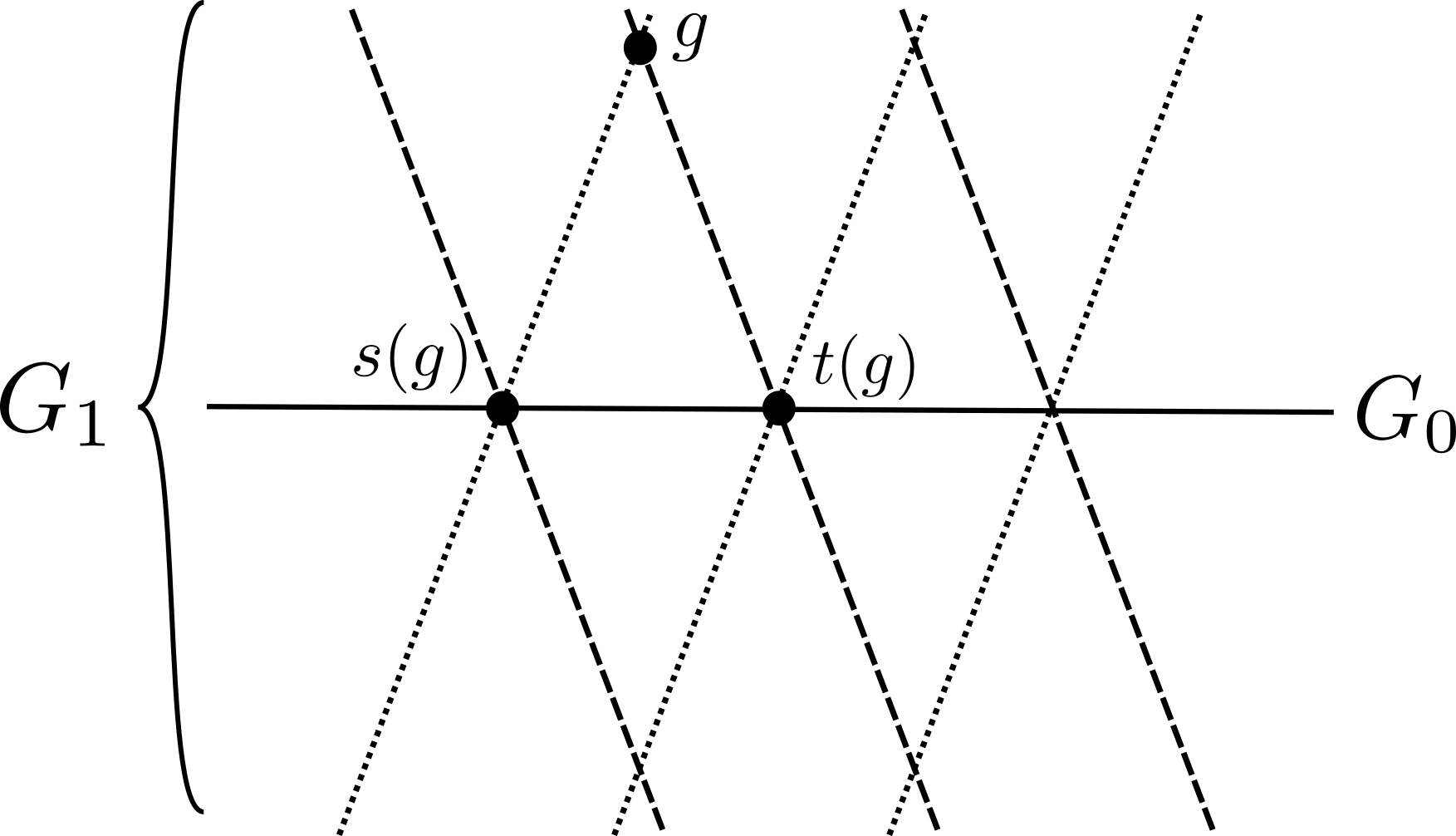}
    \caption{Diagram of a Lie groupoid $G_\bullet$, showing a point $g \in G_1$ along with its source and target, as well as source and target fibers $s^{-1}(x), t^{-1}(x)$ for several values of $x \in G_0$. Dashed lines correspond to source fibers, while dotted lines correspond to target fibers. The solid central line corresponds to $G_0$ embedded within $G_1$ via the unit map $u$.}
    \label{fig:groupoid}
\end{figure}

\FloatBarrier

\begin{exa}[Groupoid Defined by a Group] \label{def:delooping}
    A group $G$ can be used to define a groupoid $BG = G_1 \rightrightarrows G_0$ where $G_1=G$, and $G_0 = \{*\}$ is the trivial space. This groupoid is sometimes called the delooping of $G$, and it is intimately related to the classifying space of $G$ (often denoted by the same symbol $BG$ in homotopy theory).
\end{exa}
\begin{exa}[Strict Lie 2-Group]
    When $G_1$ and $G_0$ are both Lie groups and the structure maps $s,t,u,i$ are all Lie group homomorphisms, we say that $G_\bullet$ is a strict Lie 2-group. The most common, yet trivial, example of a strict Lie 2-group is $BG$ as defined in the preceding example.
\end{exa}
\begin{exa}[Pair Groupoid]
    Let $G_0$ be any set and let $G_1=G_0\times G_0 \rightrightarrows G_0$. Define the maps $s,t:G_1\to G_0$ by $s(a,b)=a$ and $t(a,b)=b$ respectively. Define $m((a,b),(b,c))=(a,c)$, $u(a)=(a,a)$, and $i(a,b)=(b,a)$. This construction is called the {pair groupoid} of the set $G_0$. The image of $u$ is called the {diagonal}.
\end{exa}

\begin{exa}[Fundamental Groupoid] \label{def:fundgroupoid}
    Let $M$ be a manifold and consider the set $\Pi_1(M)$ consisting of tuples $([\gamma],x,y)$, where $[\gamma]$ is a homotopy class of paths $\gamma : [0,1] \to M$ with fixed endpoints $x$ and $y$. This set forms a groupoid $\Pi_1(M)\rightrightarrows M$ called the fundamental groupoid, where composition is only defined between paths whose endpoints meet.
\end{exa}

\begin{exa}[Action Groupoid]
    Let $G$ be a group, with identity $e$, acting on a set $X$ by $x\mapsto g\cdot x$. The group action can be described using the {action groupoid} $G\times X \rightrightarrows X$, where $s(g,x)=x$, $t(g,x)=g\cdot x$, $m((g,h\cdot x)),(h,x))=(gh,x)$, $i(g,x)=(g^{-1},g\cdot x)$, and $u(x) = (e,x)$. 
\end{exa}

\begin{exa}[Gauge Groupoid] \label{def:gaugegroupoid}
    Let $M$ be a manifold and let $P$ be a principal $G$-bundle over $M$. The gauge groupoid $\mathcal{G}(P)_\bullet$ is the groupoid $(P\times P)/G \rightrightarrows M$ whose morphisms are equivalence classes of pairs $[p,p']$ where the equivalence relation is that $(p,p') \sim (pg,p'g)$ for all $g\in G$. One can show that such morphisms are equivalent to specifying $G$-equivariant maps between the fibers of $P$, i.e. maps $\varphi_{[p,p']} : P|_{\pi(p')} \to P|_{\pi(p)}$. 
\end{exa}

\begin{exa}[Groupoids Associated to Foliations] Let $M$ be a manifold and let $\mathcal{F}$ be a regular foliation of $M$. Consider the groupoid consisting of tuples $([\gamma],x,y)$, where $x,y$ lie in the same leaf of $\mathcal{F}$, and $\gamma$ is an equivalence class of paths from $x$ to $y$ consisting of all homotopic paths all lying entirely within that leaf. We call the resulting groupoid the monodromy groupoid $\textrm{Mon}(M,\mathcal{F}) \rightrightarrows M$.

Meanwhile, another groupoid can be constructed from a foliation, called the holonomy groupoid $\textrm{Hol}(M,\mathcal{F}) \rightrightarrows M$. The morphisms of this groupoid are germs of local diffeomorphisms between transversals of the foliation $\mathcal{F}$ at points $x$ and $y$. In other words, they measure the foliation holonomy (not to be confused with the holonomy of a connection) of paths along leaves in the manifold.
\end{exa}

\begin{defn}[Groupoid Action]
    Let $X$ be a set. A (left-)$G_\bullet$ {action} on $X$ consists of a map $J : X\to G_0$ called the anchor, and a map $a:G_1\times_{J,s} X \to X$ called the action, where \[G_1\ltimes X:=G_1\times_{J,s}X= \{(g,x) : s(g)=J(x)\}\]
    and $J(a(g,x)) = t(g)$. We denote $g\cdot x=a(g,x)$. 

    Similarly to the case of a group action, a groupoid action of $G_\bullet$ on $X$ produces an action groupoid that we denote by $G_1\ltimes X \rightrightarrows X$. We call a space $X$ equipped with such an action a $G_\bullet${-space}.
\end{defn}

\begin{defn}[Orbit and Stabilizer] Let $X$ be a set with a $G_\bullet$ action and let $x \in X$. The orbit of $x$ is defined as the set $[x]= \{y \in X : \exists g \textrm{ s.t. } g\cdot x = y\}$. The stabilizer of $x$ is defined as the set $G_1^x = \{g \in G_1 : s(g) = J(x), g\cdot x = x\}$. The set of all orbits is denoted by $X/\!/G_1$. 
\end{defn}
\begin{defn}[Isotropy Subgroupoid]
    When $G_\bullet$ is interpreted as acting on itself via the anchor $J=s$ (or $J=t$ depending on the convention), the orbits are called the orbits of $G_\bullet$ and each stabilizer $G_1^x$ forms a Lie group. These groups are called the isotropy groups of $G_1$. The collection of isotropy groups of $G_1$ can be collected into a manifold $\textrm{Iso}(G) = \bigsqcup_{x\in G_0} G_1^x$ having the structure of a Lie subgroupoid of $G_1$ over $G_0$ called the isotropy groupoid.
\end{defn}
\begin{rem}
    When $G_\bullet$ is a Lie groupoid, it is very rarely the case that $X/\!/G_1$ admits the structure of a manifold. In the case where the source and target maps are proper (i.e. the preimage of a compact set is compact), it can be seen as a stratified differentiable space. When it is \'etale (i.e. when $s$ and $t$ are local diffeomorphisms), the orbit space is the leaf space of a foliation. Finally, when the action groupoid is proper and \'etale, the orbit space is an orbifold. In the present work we will simply equip $X/\!/G_1$ with the discrete topology, which is entirely sufficient for our purposes.
\end{rem}

\begin{defn}[Equivariant Function]
    Let $X$ and $Y$ be two sets equipped with $G_\bullet$ actions. Then a function $f : X\to Y$ is said to be $G_\bullet${-equivariant} if $f(g\cdot x) = g\cdot f(x)$ for all $x \in X$.

    Note that this definition reduces to the usual one for group actions when $G_\bullet$ is replaced by a group.
\end{defn}

\begin{defn}[Haar System] A Haar system of measures on $G_1$ is a family of measures $\{\dd \lambda^p : p \in G_0\}$, with support $t^{-1}(p)$ for each $p$, such that
\begin{align*}
    p \mapsto \int_{G_1} f(g)\;\dd \lambda^p(g) 
\end{align*}
is continuous for all measurable $f$, and if $s(g')=t(g)$ then
\begin{align*}
    \int_{G_1} f(g)\;\dd\lambda^{s(g')}(g) = \int_{G_1} f(g'\cdot g)\;\dd \lambda^{t(g')}(g)
\end{align*}
\end{defn}
In addition to a system of measures on $G_1$, we will also assume for the remainder of this paper that any $G_\bullet$-space $X$ is equipped with a $G_\bullet$-invariant Radon measure $\dd\mu(x)$, for which there exists a compatible (possibly ill-behaved) measure $\dd\mu([x])$ on the quotient $X/\!/G_1$. Here, by compatible we mean that $\dd\mu(x')$ disintegrates (see e.g. the book by Fremlin \cite{Fremlin2008-bg} for a reference on the existence of disintegrations) into a product measure $\dd\lambda^{t(g)}(g)\dd\mu([x])$ where $\rho(x') = t(g)$. While this assumption is quite strong, in practice any such space will be discretized for numerical purposes, allowing us to ignore certain measure theoretic issues.

We also recall the definition of the infinitesimal analogue of a Lie groupoid, called a Lie algebroid. 
\begin{defn}[Lie Algebroid] A Lie algebroid $A$ over a manifold $M$ is a vector bundle $A \to M$ equipped with the additional data of a Lie bracket $[-,-]_A : \Lambda^2\Gamma(A) \to \Gamma(A)$ and a bundle map $\rho : A\to TM$ called the anchor map, such that $[\rho(a),\rho(b)]_{TM}=\rho([a,b]_A)$. 
\end{defn}
\begin{exa}[Tangent Lie Algebroid]
    The tangent bundle $TM$ equipped with the trivial anchor map $\rho = 1$ is a Lie algebroid.
\end{exa}

\begin{exa}[Lie Algebroid of a Lie Groupoid]
    Let $G_\bullet=(G_1\rightrightarrows G_0)$ be a Lie groupoid. The Lie algebroid of $G_\bullet$ is the vector bundle $A$ defined by $A=\ker\!\left.\left(\dd t\right)\right|_{G_0}$, equipped with a map $\rho=\dd s|_A : A\to TG_0\;$ called the anchor, and a Lie bracket $\,[\cdot,\cdot]_A$ on the sections of $A$ induced by the Lie bracket of right-invariant vector fields on $G_1$.
\end{exa}

Every Lie groupoid gives rise to a Lie algebroid in this way. However, unlike Lie algebras, not every Lie algebroid gives rise to a Lie groupoid. Lie algebroids which do give rise to a Lie groupoid are said to be integrable. Moreover, the same integrable Lie algebroid may result from multiple different Lie groupoids.

\begin{exa}[Action Algebroid]
     The Lie algebroid of an action groupoid $G\times M \rightrightarrows M$ is the trivial vector bundle $\mathfrak{g}\times M \to M$, where $\mathfrak{g} = T_e G$ is the Lie algebra of $G$. The anchor map is given by $\rho(\xi,x) = \xi^{\#}|_x$, where $\xi^\#$ is the infinitesimal symmetry generated by $\xi$, and the bracket is given by $[a(x),b(x) ] = [a(x),b(x)]_{\mathfrak{g}} - \mathcal{L}_{\rho(a(x))}b(x) + \mathcal{L}_{\rho(b(x))}a(x)$, where $\mathcal{L}$ denotes the Lie derivative and $[a(x),b(x)]_{\mathfrak{g}}$ denotes the Lie bracket of the $\mathfrak{g}$-valued coefficients of $a$ and $b$.
\end{exa}

\begin{exa}[Flow Algebroid and Flow Groupoid]
    Let $\vec{x}$ be any vector field on a manifold $M$ and suppose that the maximal domain of $\exp(t\vec{x})$ is $U\subseteq \mathbb{R}$ (with equality when $\vec{x}$ is a complete vector field). The flow algebroid of $\vec{x}$ is the trivial vector bundle $\mathbb{R}\times M \rightrightarrows M$ with anchor map $\rho(\lambda,p) = \lambda \vec{x}|_p$ and trivial bracket. The flow algebroid of a vector field is integrable, and integrates to a Lie groupoid $\mathcal{D}(\vec{x})\rightrightarrows M$ called the flow groupoid of $\vec{x}$, whose morphisms consist of tuples $(\Phi_{\vec{x}}(t),p), t\in U,p\in M$ where $\Phi_{\vec{x}}$ is the flow operator of $\vec{x}$.
\end{exa}

\begin{exa}[Atiyah/Gauge Algebroid]
The Lie algebroid of a gauge groupoid $(P\times P)/G \rightrightarrows M$ is the vector bundle $\textrm{at}(P)=TP/G \to M$, sometimes called the Atiyah algebroid, where the equivalence classes are defined by the standard action of $G$ on $TP$ by the differential of the action map. The anchor map is given by the differential of the bundle projection map $\pi : P\to M$, which is equivariant by definition and so descends to a map from $TP/G$ to $TM$. The bracket is simply induced by the ordinary Lie bracket of vector fields on $TP$.
\end{exa}

\begin{exa}[Foliation Algebroid] The tangent distribution $T\mathcal{F}$ to a regular foliation $\mathcal{F}$ of a manifold $M$ forms a vector subbundle of $TM$ which is closed with respect to the Lie bracket of vector fields. Therefore the tangent Lie algebroid structure on $TM$ restricts to $T\mathcal{F}$ giving rise to the foliation Lie algebroid of $\mathcal{F}$. Both the monodromy groupoid $\textrm{Mon}(M,\mathcal{F})$ and the holonomy groupoid $\textrm{Hol}(M,\mathcal{F})$ integrate the foliation algebroid.    
\end{exa}

More generally, we can define equivariance in terms of natural transformations. If we view a $G_\bullet$-space $X$ as being defined by the action groupoid $X_\bullet = (G\times_{s,\rho} X\rightrightarrows X)$, then the correct notion of a feature map should be a functor $F_X : X_\bullet \to \textsf{Vect}$, and an equivariant neural network $N$ should be a natural transformation of functors $N:F_X \Rightarrow F_Y$ for two $G_\bullet$-spaces $X,Y$. This is the approach introduced by de Haan \cite{deHaan} and generalized further by Maruyama \cite{maruyama}. We will review this in section \ref{sec:category}.

\section{Groupoid-Equivariant Convolutional Networks}
In this section we will propose various types of groupoid equivariant layers. These models are based on the standard definition of groupoid convolutions with respect to Haar systems of measures \cite{Mackenzie2005, williamsHaarSystemsEquivalent2016}. We are interested in modeling features taking values in a $G_\bullet$-space $X$. In particular, for the present work we will consider continuous features assigning real $d$-dimensional vectors to each point of $X$.

\begin{defn}[Feature Map]
Let $X$ be a smooth manifold. A feature map of $X$ is an integrable function $f \in \mathcal{L}^1(X,\mathbb{R}^d)$.
\end{defn}

As in the case of group-equivariant neural networks, the first layer lifts such a feature map to a function on the product $G_1\times X/\!/G_1$, where $X/\!/G_1$ is the orbit space of the action. The way this works is by first choosing representatives (`origins') $o_{[x]}$ of each orbit $[x]$ of the action groupoid $G_1\ltimes X$, and then mapping points $x \in X$ via the embedding $X \hookrightarrow G_1\times X/\!/G_1$, $x \mapsto (g,[x])$, such that $g\cdot o_{[x]} = x$. In practice one may sample representatives of each orbit and average over the results so that the final calculation is invariant to the choice of representative. Following the lift, one may then consider the action of $G_\bullet$ on $G_1\times X/\!/G_1$ induced by the canonical action of $G_\bullet$ on $G_1$ by left multiplication, which lifts the action of $G_\bullet$ on $X$. This allows us to construct the following operations inspired by the constructions of Finzi et al \cite{finzi2020}.

\begin{defn}[LieGrpdLiftingConv]\label{def:LieGrpdLiftingConv}
Let $f$ be a feature map on a $G_\bullet$-space $X$ with values in $\mathbb{R}^{c_{\rm in}}$. A groupoid kernel $k$ is defined to be a compactly supported continuous function 
\[k : G_1\times (X/\!/G_1)^2 \to \mathbb{R}^{c_{\rm out}\times c_{\rm in}}\]
We define the lifting convolution by,
\begin{equation*}
    L_k \{f\}(g,[x]) = b([x]) + \int_{X/\!/G_1}\int_{t^{-1}(\rho(y))}k(\tilde{g}^{-1}g,[x],[y])f(\tilde{g}\cdot y)\;\dd\lambda^{\rho(y)}(\tilde{g})\dd\mu([y])
\end{equation*}
\end{defn}
where $b$ is any integrable function.
The above Lie groupoid lifting convolution reduces to the usual lifting convolution of Finzi et al. \cite{finzi2020} when $G_\bullet = G\rightrightarrows \{*\}$ is a Lie group. We next define convolution for functions $f : G_1\times X/\!/G_1 \to \mathbb{R}^{c_i}$.
\begin{defn}[LieGrpdConv]
\label{def:LieGrpdConv}
Let $f$ be a feature map on the lifted space $G_1\times X/\!/G_1$ with values in $\mathbb{R}^{c_{\rm in}}$. Then we again define a groupoid kernel to be a continuous compactly supported function,
\[k : G_1\times(X/\!/G_1)^2 \to \mathbb{R}^{c_{\rm out}\times c_{\rm in}}\]
and set
\begin{equation*}
    C_k\{f\}(g,[x]) = b([x])+\int_{X/\!/G_1}\int_{t^{-1}(\rho(y))}k(\tilde{g}^{-1}g,[x],[y])f(\tilde{g}, [y])\;\dd\lambda^{\rho(y)}(\tilde{g})\dd\mu([y])
\end{equation*}
where $b$ is any integrable function.
\end{defn}
The only difference between this definition and the previous is that in a lifting convolution, we start with a function $f : X \to \mathbb{R}^{c_{\rm in}}$, while in the ordinary convolution, we start with a function $f : G_1\times X/\!/G_1 \to \mathbb{R}^{c_{\rm in}}$. 

\begin{prop}
The LieGrpdLiftingConv and LieGrpdConv operators are $G_\bullet$-equivariant.
\end{prop}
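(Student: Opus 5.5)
We first make precise what equivariance means here, since the statement leaves it implicit. The action of $G_\bullet$ on the lifted space $G_1\times X/\!/G_1$ is by left multiplication in the first factor, $h\cdot(g,[x]) = (h\cdot g,[x])$ whenever $s(h)=t(g)$, and the orbit label is untouched. On feature maps, the action is by precomposition with the inverse. For $f$ on $X$ and an arrow $h$, we set $(h\cdot f)(y) = f(h^{-1}\cdot y)$ for $y$ with $\rho(y)=t(h)$. For $f$ on the lift, we set $(h\cdot f)(\tilde g,[y]) = f(h^{-1}\tilde g,[y])$ for $t(\tilde g)=t(h)$. We then prove $L_k\{h\cdot f\} = h\cdot L_k\{f\}$ and $C_k\{h\cdot f\}=h\cdot C_k\{f\}$, evaluating both sides at points where the partial compositions are defined.

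We treat LieGrpdConv first, since it is a pure change of variables. Evaluate
\[
C_k\{h\cdot f\}(g,[x]) = b([x]) + \int_{X/\!/G_1}\int k(\tilde g^{-1}g,[x],[y])\,f(h^{-1}\tilde g,[y])\,\dd\lambda^{t(h)}(\tilde g)\,\dd\mu([y]).
\]
Substitute $\tilde g = h\tilde g'$. By the left invariance axiom of the Haar system, $\int F(\tilde g)\,\dd\lambda^{t(h)}(\tilde g) = \int F(h\tilde g')\,\dd\lambda^{s(h)}(\tilde g')$. The kernel argument becomes $\tilde g'^{-1}h^{-1}g$, so the inner integral equals $C_k\{f\}(h^{-1}g,[x])$ minus the bias. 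The bias $b([x])$ depends only on the orbit, so it is invariant. Hence the result is $(h\cdot C_k\{f\})(g,[x])$.

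For LieGrpdLiftingConv we do the same. The input is now $f(h^{-1}\cdot(\tilde g\cdot y))$, and by associativity of the action this equals $f((h^{-1}\tilde g)\cdot y)$. The substitution $\tilde g = h\tilde g'$ again uses Haar invariance and turns the integrand into $k(\tilde g'^{-1}h^{-1}g,[x],[y])\,f(\tilde g'\cdot y)$, which gives $L_k\{f\}(h^{-1}g,[x])$. Compact support and continuity of $k$, together with integrability of $f$, guarantee that all integrals converge, so the manipulations are legitimate.

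The main obstacle is bookkeeping of fibres and base points rather than any analytic difficulty. The paper writes the inner integral over $t^{-1}(\rho(y))$ against $\dd\lambda^{\rho(y)}$ with $y$ an orbit representative, yet the kernel term $\tilde g^{-1}g$ requires $t(\tilde g)=t(g)$. We would resolve this by reading $y$ as the representative $o_{[y]}$ and interpreting the integration consistently on the fibre matching $g$. Under this reading, the Haar axiom, with $g'=h$, moves the measure between $\lambda^{t(h)}$ and $\lambda^{s(h)}$, and that is exactly what the substitution needs. We would state this convention explicitly and check, at each step, that every composition written is defined, i.e. that sources and targets match.
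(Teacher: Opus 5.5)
Your proposal is correct and follows the same route as the paper: the substitution $\tilde g = h\tilde g'$, left invariance of the Haar system, and, for the lifting layer, associativity $h^{-1}\cdot(\tilde g\cdot y) = (h^{-1}\tilde g)\cdot y$. Your fibre convention is cleaner than the paper's remark: you integrate over $t^{-1}(t(g))$, which is the standard groupoid convolution, so left translation by any $h$ with $t(h)=t(g)$ carries $\lambda^{s(h)}$ to $\lambda^{t(h)}$, whereas the paper keeps the fibre $t^{-1}(\rho(y))$ and restricts $h$. One typing issue remains, inherited from the definition: in the lifting layer $\tilde g\cdot o_{[y]}$ needs $s(\tilde g)=\rho(o_{[y]})$. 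It does not affect the argument, since $s(h\tilde g')=s(\tilde g')$.
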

\begin{proof}
We treat LieGrpdLiftingConv since the proof for LieGrpdConv is identical with 
$f(\tilde{g}\cdot y)$ replaced by $f(\tilde{g},[y])$. Let $h\in G_1$ with 
$t(h)=t(g)$. The proof will follow nearly immediately from the invariance properties of the Haar system. Expanding the definition,
\begin{align*}
L_k\{f\circ (h^{-1}\cdot)\}(g,[x])
&= \int_{X/\!/G_1}\int_{t^{-1}(\rho(y))} 
k(\tilde{g}^{-1}g,[x],[y])\,
f\!\left(h^{-1}\cdot(\tilde{g}\cdot y)\right)
\dd\lambda^{\rho(y)}(\tilde{g})\,\dd\mu([y]).
\end{align*}
Substituting $\tilde{g}' = h^{-1}\tilde{g}$, so $\tilde{g}^{-1}g = \tilde{g}'^{-1}(h^{-1}g)$, and applying left-invariance of the Haar system,
\begin{align*}
&= \int_{X/\!/G_1}\int_{t^{-1}(\rho(y))} 
k(\tilde{g}'^{-1}(h^{-1}g),[x],[y])\,
f\!\left(\tilde{g}'\cdot y\right)
\dd\lambda^{\rho(y)}(\tilde{g}')\,\dd\mu([y])
= L_k\{f\}(h^{-1}g,[x]).
\end{align*}
Hence $L_k\{f\circ(h^{-1}\cdot)\}(g,[x]) = L_k\{f\}(h^{-1}g,[x])$, which is what we wanted to show.
\end{proof}
\begin{rem}
    The substitution $\tilde{g}' = h^{-1}\tilde{g}$ is valid within each fiber $t^{-1}(\rho(y))$ provided $t(h)=\rho(y)$. Since the kernel $k(\tilde{g}^{-1}g,[x],[y])$ depends on the orbit labels $[x],[y]$ only as parameters and the $G_\bullet$-action leaves these fixed, we may restrict without loss of generality to morphisms $h$ satisfying $t(h)=\rho(y)$ for each $[y]\in X/\!/G_1$. This is always possible when $G_\bullet$ acts transitively on $G_0$, and in general one should interpret the equivariance property as holding fiber-by-fiber over the orbit space $X/\!/G_1$.
\end{rem}

We have now constructed the initial and intermediate layers of a groupoid-equivariant convolutional network. The final layer of a typical deep convolutional neural network is a global pooling layer. This must be constructed in an invariant way. 

Group Invariant Global Pooling (GIGP), introduced by Bujel et al \cite{bujel2023}, uses integration over the orbits as a way to pool feature maps on Lie groups. For a Lie group $G$, a $G$-space $X$, and a compactly supported function $f : X \to \mathbb{R}^m$, the GIGP function is $\textrm{GIGP}(f)([x]) = \int_{G} f(g\cdot x) \,\dd \lambda(g), [x] \in X/\!/G$, where $\lambda$ is the normalized Haar measure. For use with groupoid equivariant networks such as $G_\bullet$-conv and LieGrpdConv we therefore introduce Groupoid Invariant Global Pooling.
\begin{defn}[GrpdIGP]\label{def:GrpdIGP}
    Let $f$ be a feature map on the lifted space $G_1\times X/\!/G_1$ with values in $\mathbb{R}^{c_{\rm in}}$. Then we define the groupoid-invariant global pooling of $f$ as, 
    \begin{equation*}
        \textrm{GrpdIGP}\{f\}([x]) =b([x])+ \int_{t^{-1}(\rho(x))} f(g, [x])\,\dd\lambda^{\rho(x)}(g)
    \end{equation*}
\end{defn}
where $b$ is any integrable function on the orbit space $X/\!/G_1$.
Since GrpdIGP is $G_\bullet$-invariant, it follows that any remaining layers in the neural network will also be invariant.
\begin{rem}
    A transitive groupoid is a groupoid with only one orbit. In the case where the action groupoid $G_1\ltimes X \rightrightarrows X$ is transitive, the GrpdIGP operation reduces to a simple mean-reduction global pooling operation.
\end{rem}
To summarize, a diagram of the proposed neural network architecture is provided below in Figure \ref{fig:architecture}. Pointwise nonlinearities (activations) are to be inserted between each layer.
\FloatBarrier
\begin{figure}[h]
\centering
\begin{tikzpicture}[
    node distance=1.2cm,
    every node/.style={
        draw,
        rectangle,
        rounded corners,
        align=center,
        inner sep=4pt
    },
    arrow/.style={->, thick}
]

\node (input) {$f \in \mathcal{L}^1(X,\mathbb{R}^{d_0})$};

\node (lift) [below of=input] {LieGrpdLiftingConv};

\node (gconv1) [below of=lift] {$G_\bullet$-Conv};

\node (dots) [below of=gconv1, draw=none] {$\vdots$};

\node (gconvN) [below of=dots] {$G_\bullet$-Conv};

\node (pool) [below of=gconvN] {GrpdIGP};

\node (output) [below of=pool] {Invariant Output};

\draw[arrow] (input) -- node[right, draw=none] {$X$} (lift);

\draw[arrow] (lift) -- node[right, draw=none] {$G_1$} (gconv1);

\draw[arrow] (gconv1) -- (dots);
\draw[arrow] (dots) -- (gconvN);

\draw[arrow] (gconvN) -- node[right, draw=none] {$G_1$} (pool);

\draw[arrow] (pool) -- node[right, draw=none] {} (output);

\end{tikzpicture}

\caption{Schematic of a Lie groupoid invariant neural network architecture, not including activations. An equivariant architecture can be obtained by removing the final pooling layer.}
\label{fig:architecture}
\end{figure}

\begin{rem}
    We here comment on the implementation of Lie groupoid-equivariant neural networks. While Lie groups consist only of a single manifold $G$, a Lie groupoid consists of morphisms between pairs of points in a manifold $M$. Thus, at worst case one can expect the size of an action groupoid (and hence the number of morphisms used in our construction) to scale quadratically with the size of the space $M$ upon which a groupoid acts. 
\end{rem}

\FloatBarrier

\section{Lie Algebroid Parametrization}\label{sec:algebroid_kernel}

Let $G_\bullet=(G_1\rightrightarrows G_0)$ be a Lie groupoid with Lie algebroid $A=\ker\!\left.\left(\dd t\right)\right|_{G_0}$ having anchor map $\rho=\dd s|_A : A\to TG_0$. For each $p\in G_0$ there exists an open precompact neighbourhood $U_p\subseteq A_p$ and a smooth groupoid exponential map
\[
\exp_p : U_p \to s^{-1}(p), \quad p\in G_0
\]
such that $\exp_p(0_p)=u(p)$, and $\exp_p$ is a diffeomorphism onto a neighbourhood $V_p$ of $u(p)$ in $G_1$ (see \cite[Proposition 3.6.1]{Mackenzie2005}, \cite[Theorem 3.2]{kubarski1982exponential}, and references within).

Fix a smooth positive density $\dd \nu_p$ on each fiber $A_p$. Let $k : G_1\times (X/\!/G_1)^2 \to \mathbb{R}^{c_{\rm out}\times c_{\rm in}}$ be a kernel function. Then define
\[K(v,[x],[y]) := k(\exp(v),[x],[y])\]
and assume that $K$ is compactly supported on a compact subset $\overline{U}_p$ (the closure of $U_p$ above) of each fiber $A_p$ of $A$, such that in particular, $\exp$ is a local diffeomorphism on $\overline{U}_p$ (i.e. it has a smooth inverse $\log(g)$ for all $g$ in the support of $k$). Let $J_p(v)$ be the Jacobian determinant of $\exp_p$ with respect to $\nu^p$ and the Haar system $\lambda^p$.  
\begin{defn}[LieAlgbdLiftingConv]\label{def:LieAlgbdLiftingConv}
    Given a feature map $f : X \to \mathbb{R}^{\rm c_{\rm in}}$, we define the LieAlgbdLiftingConv of $f$ with respect to the kernel $k$ by,
    \begin{equation*}
    \begin{split}
        \tilde{L}_k\{f\}(v,[x]) &:= 
        \int\limits_{X/\!/G_1}\int\limits_{A_{\rho(x)}}
            \left[K(v-w,[x],[y])\; f\!\left(\exp(w)\cdot y\right) J_{\rho(x)}(w)\right]\,\dd\nu^{\rho(x)}(w) \dd\mu([y]).
        \end{split}
    \end{equation*}
    to which we may add any integrable bias $b([x])$.
\end{defn}
\begin{defn}[LieAlgbdConv]\label{def:LieAlgbdConv}
    Given a feature map $f : G_1\times X/\!/G_1 \to \mathbb{R}^{\rm c_{\rm in}}$, we define the LieAlgbdConv of $f$ with respect to the kernel $k$ by,
    \begin{equation*}
    \begin{split}
        \tilde{L}_k\{f\}(v,[x]) &:= 
        \int\limits_{X/\!/G_1}\int\limits_{A_{\rho(x)}}
            \left[K(v-w,[x],[y])\; f\!\left(\exp(w), [y]\right)  J_{\rho(x)}(w)\right]\,\dd\nu^{\rho(x)}(w) \dd\mu([y]).
        \end{split}
    \end{equation*}
    to which we may add any integrable bias $b([x])$.
\end{defn}
For convenience in stating the coming proposition we also define the Lie algebroid convolution binary operation $(-)*_A(-)$ as,
\[f *_A K(v,[x]) = \tilde{L}_k\{f\}(v,[x]).\]
The LieAlgbdConv and LieAlgbdLiftingConv layers yield equivariant functions with respect to $G_1$ provided that the kernel $K$ is $\textrm{Ad}$-equivariant with respect to $G_\bullet$, in that $K(\xi,[x],[y])=K(\mathrm{Ad}_{g^{-1}}(\xi),[x],[y])$ for all $[x],[y]\in X/\!/G_1$ and $\xi \in A$. This is summarized by the following proposition
\begin{prop}\label{thm:algebroid_equiv}
Let $f\in \mathcal{L}^1(X,\mathbb{R}^m)$ and let $K$ be a smooth, compactly-supported kernel as above. Suppose $K$ is $\textrm{Ad}$-equivariant. Then for any $g\in G_1$, any $x\in X$ with $s(g)=\rho(x)$, and any $v\in A_{\rho(x)}$,
\begin{equation*}
    \left(f\circ (g^{-1}\cdot) *_{A} K\right)\left(\mathrm{Ad}_g(v),\,[g\cdot x]\right)
    \;=\;
    \left(f*_{A} K\right)\!(v,[x]).
\end{equation*}
Hence the LieAlgbd(Lifting)Conv operator $f\mapsto \tilde{L}_k\{f\}$ is $G_\bullet$-equivariant.
\end{prop}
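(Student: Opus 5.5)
Expand the left-hand side from the definition of $*_A$ and use a change of variables that transports the integration fiber $A_{\rho(x)}$ to $A_{\rho(g\cdot x)}=A_{t(g)}$ via $\mathrm{Ad}_g$. Writing $x'=g\cdot x$, we have $\rho(x')=t(g)$ and $[x']=[x]$, since $x$ and $x'$ lie in the same orbit. So
\[
\left(f\circ(g^{-1}\cdot)*_A K\right)(\mathrm{Ad}_g(v),[x])=\int_{X/\!/G_1}\int_{A_{t(g)}} K(\mathrm{Ad}_g(v)-w',[x],[y])\,f\!\left(g^{-1}\cdot(\exp(w')\cdot y)\right)J_{t(g)}(w')\,\dd\nu^{t(g)}(w')\,\dd\mu([y]).
\]
Substitute $w'=\mathrm{Ad}_g(w)$ with $w\in A_{\rho(x)}$. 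Because $\mathrm{Ad}_g$ is linear on fibers, $\mathrm{Ad}_g(v)-\mathrm{Ad}_g(w)=\mathrm{Ad}_g(v-w)$. The $\mathrm{Ad}$-equivariance hypothesis on $K$ then turns the kernel factor into $K(v-w,[x],[y])$.

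Next, treat the feature factor. The key identity is the groupoid analogue of $\exp(\mathrm{Ad}_g w)=g\exp(w)g^{-1}$, namely $\exp_{t(g)}(\mathrm{Ad}_g w)=g\cdot\exp_{s(g)}(w)\cdot g^{-1}$. I would justify it by naturality of the exponential: conjugation by the bisection through $g$ is a local groupoid automorphism whose derivative at units is $\mathrm{Ad}_g$. I would cite \cite{Mackenzie2005} for this, restricting to the support of $K$ where $\exp$ is a diffeomorphism. Composability requires care. The point $y$ must be re-based along the orbit: replace the representative $y$ of $[y]$ by $g\cdot y$, using that $\mu([y])$ only sees orbits and that the kernel depends on $[y]$ only. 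With this, $g^{-1}\cdot\exp(\mathrm{Ad}_g w)\cdot(g\cdot y)=\exp(w)\cdot y$. For the lifted (LieAlgbdConv) version the analogous step is invariance of the lifted feature under the left translation $\tilde g\mapsto g^{-1}\tilde g$, as in the proof of the LieGrpdConv equivariance proposition above.

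Finally, account for the measures. I need
\[
J_{t(g)}(\mathrm{Ad}_g w)\,\dd\nu^{t(g)}(\mathrm{Ad}_g w)=J_{\rho(x)}(w)\,\dd\nu^{\rho(x)}(w).
\]
Since $J_p\,\dd\nu^p$ is by definition the pullback of the Haar measure along $\exp_p$, this reduces to the invariance of the Haar system under conjugation/translation by $g$ in the Haar system definition, combined with the exponential identity above. Collecting the three factors yields $(f*_AK)(v,[x])$. Equivariance of $f\mapsto\tilde L_k\{f\}$ then follows, and the bias term is unaffected because $[g\cdot x]=[x]$.

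The main obstacle is this measure step together with the exponential-conjugation identity. The Haar system is only left-invariant, so conjugation introduces a right translation whose effect on $\lambda$ is a modular-type factor. I would first try to absorb it by assuming (or noting implicitly) that the density $\nu^p$ is chosen $\mathrm{Ad}$-compatibly, i.e.\ $\nu^{t(g)}=(\mathrm{Ad}_g)_*\nu^{s(g)}$, so that only the Jacobians need to match. If a unimodularity-type condition is still needed, I would state it explicitly, in the spirit of the fiber-by-fiber caveat in the remark following the equivariance proposition above.
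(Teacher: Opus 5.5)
Your proposal is essentially the paper's proof. It uses the same expansion, the substitution $w=\mathrm{Ad}_g u$, $\mathrm{Ad}$-invariance of $K$, and the identity $\exp_{t(g)}(\mathrm{Ad}_g u)=g\exp_{s(g)}(u)g^{-1}$ with $y$ re-based to $g\cdot y$ (the paper does this silently by writing $f(g^{-1}\cdot(\exp_{t(g)}(w)\cdot(g\cdot y)))$), followed by a measure-matching step. That measure step, which you rightly flag, is exactly where the paper simply asserts ``invariance of the Haar system under the adjoint action'', so your explicit extra hypothesis is what the paper tacitly uses; note, though, that choosing $\nu^p$ $\mathrm{Ad}$-compatibly cannot absorb the factor, since $J_p\,\dd\nu^p=\exp_p^*\lambda^p$ is independent of $\nu^p$, so the condition must be conjugation-invariance of $\lambda$ itself (in the one-object case this is unimodularity, which a nonzero, continuous, compactly supported, $\mathrm{Ad}$-invariant $K$ in fact forces).
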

\begin{proof}
Let $v\in A_{\rho(x)}=A_{s(g)}$, so that $\mathrm{Ad}_g(v)\in A_{t(g)}=A_{\rho(g\cdot x)}$. Expanding the definition of the convolution of $f\circ(g^{-1}\cdot)$ with $K$:
\begin{align*}
&\left(f\circ(g^{-1}\cdot)*_{A}K\right)\left(\mathrm{Ad}_g(v),[g\cdot x]\right)\\
&\quad= \int\limits_{X/\!/G_1}\;\;\int\limits_{\mathclap{A_{t(g)}}} K\!\left(\mathrm{Ad}_g(v)-w,[g\cdot x],[y]\right)
f\!\left(g^{-1}\cdot\left(\exp_{t(g)}(w)\cdot(g\cdot y)\right)\right)
J_{t(g)}(w)\;\dd\nu^{t(g)}(w)\dd\mu([y]).
\end{align*}

Since $[g\cdot x]=[x]$ in $X/\!/G_1$ and $K$ is equivariant (so $K(\xi,[x],[y])=K(\mathrm{Ad}_{g^{-1}}(\xi),[x],[y])$), and noting that $\mathrm{Ad}_{g^{-1}}(\mathrm{Ad}_g(v)-w)= v - \mathrm{Ad}_{g^{-1}}(w)$, this becomes
\begin{align*}
&= \int\limits_{X/\!/G_1}\;\;\int\limits_{\mathclap{A_{t(g)}}} K\!\left(v - \mathrm{Ad}_{g^{-1}}(w),[x],[y]\right)
f\!\left(g^{-1}\cdot\left(\exp_{t(g)}(w)\cdot(g\cdot y)\right)\right)
J_{t(g)}(w)\dd\nu^{t(g)}(w)\dd\mu([y]).
\end{align*}

We now change variables via the groupoid adjoint action $\mathrm{Ad}_{g^{-1}}:A_{t(g)}\to A_{s(g)}$, setting $u=\mathrm{Ad}_{g^{-1}}(w)\in A_{s(g)}$. By invariance of the Haar system under the adjoint action,
\[
J_{t(g)}(w)\dd\nu^{t(g)}(w) = J_{s(g)}(u)\dd\nu^{s(g)}(u).
\]
We also have,
\[
\exp_{t(g)}(w) = \exp_{t(g)}(\mathrm{Ad}_g(u)) = g\exp_{s(g)}(u)g^{-1},
\]
so the argument of $f$ simplifies:
\begin{align*}
g^{-1}\cdot\left(\exp_{t(g)}(w)\cdot(g\cdot y)\right)
&= g^{-1}\cdot\left(g\exp_{s(g)}(u)g^{-1}\cdot g\cdot y\right)\\&
= g^{-1}\cdot g\cdot\left(\exp_{s(g)}(u)\cdot y\right)\\&
= \exp_{s(g)}(u)\cdot y.
\end{align*}
Substituting everything, we obtain
\begin{align*}
&\left(f\circ(g^{-1}\cdot)*_{A}K\right)\left(\mathrm{Ad}_g(v),[g\cdot x]\right)\\
&= \int\limits_{X/\!/G_1}\;\;\int\limits_{\mathclap{A_{s(g)}}} K\!\left(v-u,[x],[y]\right)
f\!\left(\exp_{s(g)}(u)\cdot y\right)
J_{s(g)}(u)\dd\nu^{s(g)}(u)\dd\mu([y])\\
&= \left(f*_{A}K\right)(v,[x]).
\end{align*}
Hence $f\mapsto\tilde{L}_k\{f\}$ is equivariant under the groupoid action.
\end{proof}

\begin{rem}[Relation to LieConv]
When $G_\bullet$ is a Lie group $G\rightrightarrows\{*\}$, the Lie algebroid $A$ is the Lie algebra $\mathfrak g$ considered as a trivial bundle over a point $\{*\}$, and the LieAlgbdConv layer reduces exactly to the LieConv kernel parameterisation of \cite{finzi2020} in the case where $G$ acts freely.  
Thus the above recovers LieConv as the special case $A=\mathfrak g \to \{*\}$. Additionally, when $G$ does not act freely, one can obtain the action algebroid $A = \mathfrak{g}\times X$, in which case we again recover the Lie algebra parametrization for the orbit-wise groupoid convolution.
\end{rem}

\begin{rem}[Implementation]
Numerically one samples $v\in U_p$ (e.g.\ via Monte Carlo or a fixed lattice in $A_p$) and approximates the integral over the fiber of $A$.  
Because $K$ has compact support by assumption, only a finite number of samples are needed for each $x$. A specialized sampling procedure such as Markov-Chain Monte Carlo may be needed to ensure sample points are distributed according to the measure on each fiber. In practice, a uniform measure is the most likely scenario, in which case a simple lattice will do the job. Back-propagation is straightforward since all maps are smooth.
\end{rem}

\section{Categorical Interpretation of Groupoid Convolutions}
\label{sec:category}
\subsection{Categorical Equivariant CNNs}

To begin, we will reintroduce the notion of a feature functor and a categorical kernel introduced by Maruyama, and then show how our proposed architecture fits into this framework. The following reviews the definitions of \cite[Section~2]{maruyama}.

Let $G_\bullet$ be a Lie groupoid and let $\mathsf{Vect}$ denote the category of vector spaces. We will first review the definition of a feature functor.

\begin{defn}[Feature Functor {\cite[Definition~2]{maruyama}}]
\label{def:feature-functor}
Assign to each $p \in G_0$ a compact space $\Omega(p)$ locally homeomorphic to $\mathbb{R}^{m_p}$, where $m : G_0\to \mathbb{N}$ is any function. Let $\mathcal{U} = \{\Omega(p)\}_{p \in G_0}$ denote the set of such associated spaces.

Next, let $E : G_0\to \mathsf{Vect}$ be a family of 
finite-dimensional real vector spaces associated to each point of $G_0$. We write $E_p$ for the image $E(p)$, and will call it the feature fiber above $p$. 

From $E$ and $\mathcal{U}$ we build two auxiliary categories: the 
{base category} $\mathcal{U}_\bullet$, with objects $\Omega(p)$ and 
morphisms given by continuous maps $\Omega(p) \to \Omega(q)$; and the {transport 
category} $E_\bullet$, with objects $p\in G_0$ and morphisms given by 
linear maps $E_p \to E_p$. This is reminiscent of a gauge groupoid, except there is no invertibility requirement for a morphism.

To each arrow $g\in G_1$ we assign:
\begin{itemize}
    \item a continuous {base transport} $\tau_{g} : \Omega(s(g)) \to \Omega(t(g))$, 
    with $\tilde{\tau}_{g} := \tau_{g^{-1}} : \Omega(t(g)) \to \Omega(s(g))$ 
    its reverse,
    \item a linear {fiber transport} $L_{g} : E_{s(g)} \to E_{t(g)}$.
\end{itemize}
These are required to define functors
\begin{align*}
    \tau &: G_\bullet \to \mathcal{U}_\bullet, 
    & p &\mapsto \Omega(p), & g &\mapsto \tau_{g}, \\
    \tilde{\tau} &: G_\bullet^{\mathrm{op}} \to \mathcal{U}_\bullet, 
    & p &\mapsto \Omega(p), & g &\mapsto \tilde{\tau}_{g}, \\
    L &: G_\bullet \to E_\bullet, 
    & p &\mapsto E_p, & g &\mapsto L_{g}.
\end{align*}
Note that mixed-base compatibility, assumed by Maruyama as a separate condition, follows from the assumption that $G_\bullet$ is a groupoid:
\[
    \tau_{g\circ h} \circ \tilde{\tau}_{h} 
    \;=\; \tau_{g}
\]
We also assume continuity of all 
assignments with respect to the compact-open topology on hom-sets. A feature functor associated to $E,\mathcal{U}$ is then a functor
\[
    F : G_\bullet^{\mathrm{op}} \to \mathsf{Vect}
\]
defined on objects by $F(p) = C^0(\Omega(p), E_p)$ and on arrows by
\[
    F(g) : F(t(g)) \to F(s(g)), \qquad 
    F(g)(f) \;=\; L_{g^{-1}} \circ f \circ \tau_{g}.
\]
\end{defn}

Functoriality of $F$ follows from functoriality of $\tau$ and $L$. Each $F(p)$ is a space of local 
continuous feature maps on $\Omega(p)$, taking values pointwise in the feature fiber 
$E_p$. 

\begin{rem}[Special Cases]From a differential geometry point of view, one can interpret $F$ as assigning to each point $p$ a trivial vector bundle $E_p\times \Omega(p)$ over $\Omega(p)$ with typical fiber $E_p$, whose sections are the features at $p$. A special case of this would be to take $\Omega(p) = \{p\}$, so that $\sqcup E_p$ has the structure of a vector bundle over $G_0$. 

Another special case would be to assign a vector space of a different dimension to each point $p$. A third special case would be to take each $\Omega(p)$ to be the closure of a small enough precompact open neighbourhood of each $p$, and take $\{E_p: p \in G_0\}$ to be a presheaf of vector spaces on the corresponding category of open sets. A schematic picture of possible feature functors is shown in Figure \ref{fig:varyingdimension}.

However, we remark that these special cases do not include any mixing between disconnected components of the underlying space $G_0$. In particular, in group-equivariant and groupoid-equivariant convolutional neural networks we have mixing between features over disjoint orbits $[x]$ and $[y]$. This will be accounted for in subsection \ref{subsec:GeneralCase}.
\end{rem}

\begin{figure}[h]
    \centering
    \includegraphics[width=0.8\linewidth]{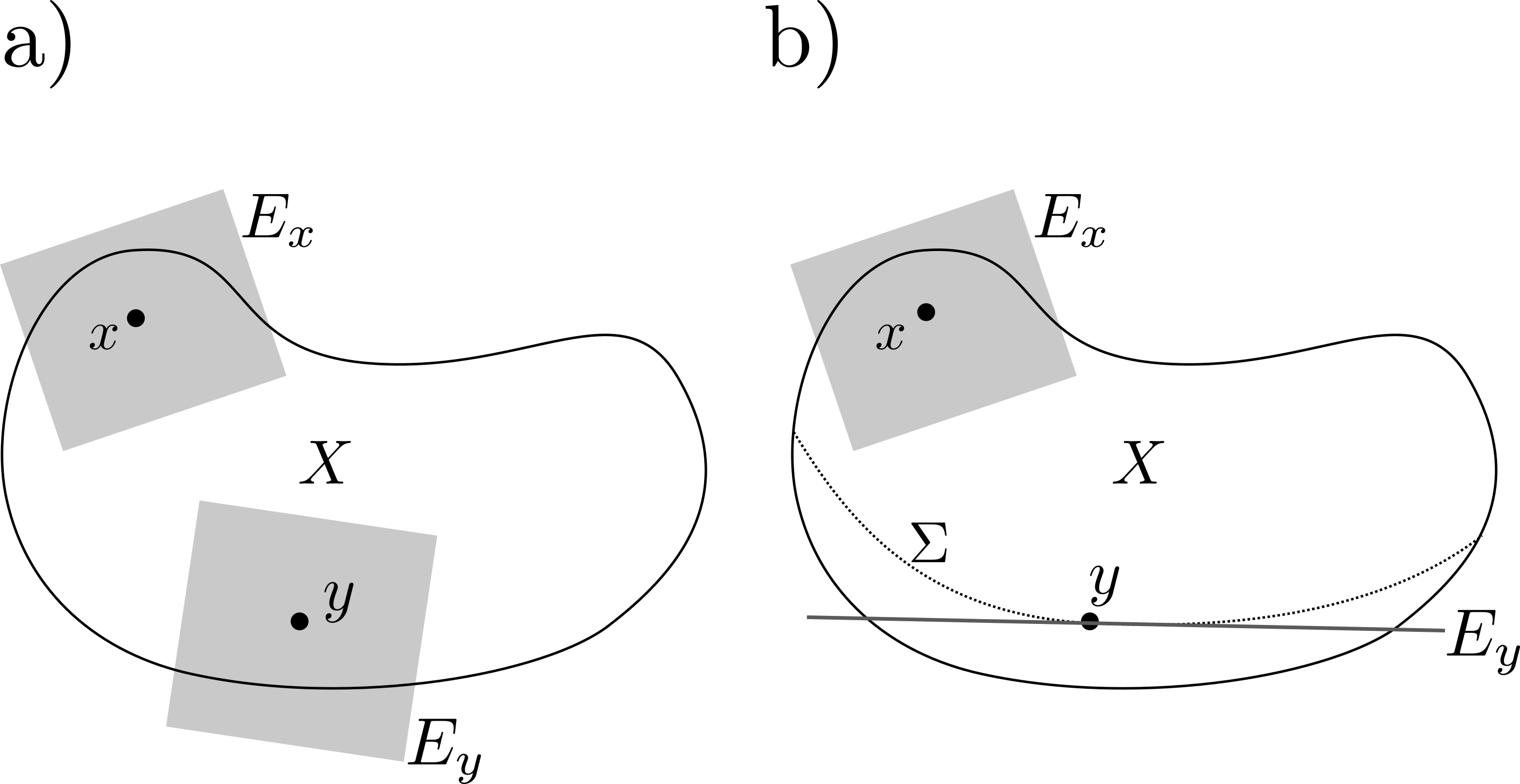}
    \caption{a) Diagram of a manifold $X = G_0$, showing the fibers $E_x$ of a vector bundle defining feature fibers of constant dimension $2$ over associated spaces $\Omega(x)=\{x\}$. b) Diagram of a stratified space $X$ with stratification $X = (X\setminus \Sigma) \sqcup \Sigma$, where the feature fibers are again defined over associated spaces $\Omega(x)=\{x\}$, and above $y \in \Sigma$ the fiber is 1-dimensional while the fiber above $x \in X\setminus\Sigma$ is 2-dimensional.}
    \label{fig:varyingdimension}
\end{figure}

\FloatBarrier

\begin{defn}[Continuous Equivariant Map {\cite{maruyama}}]
\label{def:equivariant-map}
Let $F, F' : G_\bullet^{\mathrm{op}} \to \mathsf{Vect}$ be feature functors associated 
to $E, E'$ respectively. A continuous equivariant map from $F$ to $F'$ 
is a natural transformation $\Phi : F \Rightarrow F'$ such that for each $p \in G_0$, the 
map $\Phi_p : F(p) \to F'(p)$ is continuous with respect to the sup-norm topology defined by the bases.
\end{defn}

Naturality of $\Phi$ means that for every arrow $g\in G_1$,
\begin{equation*}
\label{eq:naturality}
    F'(g) \circ \Phi_{t(g)} \;=\; \Phi_{s(g)}\circ F(g).
\end{equation*}
Unwinding the definitions, this says that for all $f \in F(g)$ and $q\in \Omega(s(g))$,
\[
    L_{g^{-1}}\left(\Phi_{t(g)}(f)(\tau_{g}(q))\right) 
    \;=\; \Phi_{s(g)}\left(L_{g^{-1}} \circ f \circ \tau_{g}\right)(q).
\]

\begin{defn}[Category Kernel {\cite[Definition~4]{maruyama}}]
\label{def:category-kernel}
Let $F, F'$ be feature functors associated to $E, E'$ as above. Let $L(E_q,E_p')$ be the space of linear maps from $E_q$ to $E_p'$. A category kernel is a family
\[
    K \;=\; \{K_{q \to p}\}_{(q,p) \in G_0^2}, 
    \qquad 
    K_{q\to p} : \mathrm{Hom}_{G_\bullet}(q,p) \times \Omega(p) 
    \;\longrightarrow\; L\left(E_a,\, E'_p\right),
\]
satisfying Carath\'eodory regularity, a uniform $L^1$ bound with respect to the Haar system, and the integrated naturality condition: for every arrow $h\in G_1$ and every $q\in \Omega(p)$,
\begin{equation*}
\label{eq:integrated-naturality}
\begin{split}
    &L_{h^{-1}} \int_{\mathrm{Hom}_{G_\bullet}(-,\, t(h))} 
    K_{r \to t(h)}(g,\, \tau_{h}(q))\, F(g)(-)\, d\lambda^{\rho(x)}(g)
    \;\\&=\;
    \int_{\mathrm{Hom}_{X_\bullet}(-,\, p)} 
    K_{r\to p}(g,\, q)\, F(h \circ g, r)(-)\, d\lambda^{\rho(p)}(g).
    \end{split}
\end{equation*}
\end{defn}
The category convolution associated to a kernel $K$ and a natural bias 
$b = \{b_p \in F'(p)\}$ is the family $\mathcal{L}_K = \{(\mathcal{L}_K)_p\}$ defined by
\begin{equation*}
\label{eq:category-convolution}
    \mathcal{L}_K \{f\}_p(q) 
    \;=\; b_p(q) \;+\; 
    \int_{I(p)} K_{r\to p}\left(g,\, q\right)\, 
    F(g)(f_r)\left(\tilde{\tau}_{g}(q)\right)\, 
    d\lambda^{\rho(p)}(g)\, d\mu(r),
\end{equation*}
where $I(p) = \bigsqcup_{r \in X} \mathrm{Hom}_{X_\bullet}(r,p)$ is the incoming arrow 
bundle at $p$. Note that $I(p) = t^{-1}(p)$ in the standard Lie groupoid notation. 

Note that $G_\bullet$ equipped with a Haar system is precisely a small topological category with measures in the sense of \cite[Definition~1]{maruyama}. 

\begin{defn}[Natural Bias]\label{def:naturalbias} Let $F : G_\bullet^{\rm op} \to \mathsf{Vect}$ be a feature functor with $E$ the associated family of vector spaces. A natural bias $b$ for $F$ is a family of functions $\{b_p \in C^0(\Omega(p),E_{p})$ such that for all $u : p \to q$ in $G_1$ we have $F(u)b_q = b_p$. 
\end{defn}

\subsection{Without Orbit-Mixing}
Let us consider now the special case where $\Omega(p) = \{p\}$, and where $E$ is equivalent to the data of a smooth 
real vector bundle over $G_0$ equipped with a left $G_\bullet$-action: that is, for each 
$g \in G_1$, a linear isomorphism 
$\hat{g} : E_{s(g)}\xrightarrow{\;\sim\;} E_{t(g)}$, satisfying $\widehat{hg} = \hat{h} \circ \hat{g}$ and $\widehat{1_{\rho(p)}} = \mathrm{id}_{E_p}$. Set $\varrho(g)=\hat{g}$. The second condition implies that the bundle projection $\pi : E \to G_0$ is an $G_\bullet$-equivariant map. Such a vector bundle is called a $G_\bullet$-representation \cite{Mackenzie2005}.
Define
\[
    F_E \;:\; G_\bullet^{\mathrm{op}} \to \mathsf{Vect}, 
    \qquad 
    F_E(p) = E_p, 
    \qquad 
    F_E(g) = \hat{g}^{-1} : E_{t(g)} \to E_{s(g)}.
\]
Functoriality of $F_E$ follows immediately from $\widehat{hg} = \hat{h} \circ \hat{g}$.
In this case a natural bias $b$ is simply a $G_\bullet$-invariant section of $E$.

\begin{lemma}
\label{prop:equiv-naturality}
Let $E, E' \to G_0$ be $G_\bullet$-representations with associated feature functors 
$F_{E}$, $F_{E'}$ as above. A family of linear maps 
$\Phi = \{\Phi_p : E_{p} \to E_{p}'\}_{p \in G_0}$  is $G_\bullet$-equivariant (i.e. is a morphism of $G_\bullet$-representations) if and 
only if it defines a natural transformation $\Phi : F_{E} \Rightarrow F_{E'}$.
\end{lemma}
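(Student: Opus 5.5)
The argument is a direct unwinding of the naturality square of Definition~\ref{def:equivariant-map} in the special case $\Omega(p)=\{p\}$, $F_E(p)=E_p$, $F_E(g)=\hat g^{-1}$. First I would make precise what each side of the equivalence means. $G_\bullet$-equivariance of $\Phi$, i.e.\ being a morphism of $G_\bullet$-representations, means that for every $g\in G_1$ and every $e\in E_{s(g)}$ we have $\Phi_{t(g)}(\hat g e)=\hat g'\,\Phi_{s(g)}(e)$, where $\hat g'$ denotes the action on $E'$. In operator form this reads $\Phi_{t(g)}\circ\hat g=\hat g'\circ\Phi_{s(g)}$. Naturality of $\Phi:F_E\Rightarrow F_{E'}$ means that for every $g$ we have $F_{E'}(g)\circ\Phi_{t(g)}=\Phi_{s(g)}\circ F_E(g)$, which is $\hat g'^{-1}\circ\Phi_{t(g)}=\Phi_{s(g)}\circ\hat g^{-1}$ as maps $E_{t(g)}\to E'_{s(g)}$.

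For the forward direction, I assume equivariance and compose the identity $\Phi_{t(g)}\circ\hat g=\hat g'\circ\Phi_{s(g)}$ on the left with $\hat g'^{-1}$ and on the right with $\hat g^{-1}$. Both are linear isomorphisms by the definition of a $G_\bullet$-representation. This yields exactly the naturality square. The converse is the same manipulation run backwards: compose the naturality identity on the left with $\hat g'$ and on the right with $\hat g$. Since the statement quantifies over all $g\in G_1$, no case split is needed. Alternatively, one could apply naturality to $g^{-1}$ and use $\widehat{g^{-1}}=\hat g^{-1}$, which follows from $\widehat{hg}=\hat h\hat g$ and $\widehat{1_p}=\mathrm{id}$.

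The remaining conditions are checks. Linearity of each $\Phi_p$ is part of the hypothesis. Continuity in the sense of Definition~\ref{def:equivariant-map} is automatic, since each $F(p)=E_p$ is finite-dimensional and every linear map between finite-dimensional spaces is continuous in the sup-norm topology (here $\Omega(p)$ is a point, so this is just the norm topology). I should also confirm that the contravariance convention lines up: $F_E(g)$ goes from $F_E(t(g))$ to $F_E(s(g))$, so naturality is stated with $\Phi_{t(g)}$ on the left and $\Phi_{s(g)}$ on the right, as written above. The main obstacle I expect is this bookkeeping of sources, targets and inverses rather than any analytic difficulty. Getting the inverses on the correct sides is where an error would creep in, so I would write out the commutative square explicitly before manipulating it.
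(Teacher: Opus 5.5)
Your proposal is correct and matches the paper's proof: you unwind naturality to $\hat g'^{-1}\circ\Phi_{t(g)}=\Phi_{s(g)}\circ\hat g^{-1}$ and compose with the invertible transports $\hat g'$ and $\hat g$ to get $\Phi_{t(g)}\circ\hat g=\hat g'\circ\Phi_{s(g)}$ for all $g$, and you run the same step backwards for the converse. Your remark that continuity is automatic in finite dimensions is harmless but not needed, since the lemma only asserts naturality.
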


\begin{proof}
This is the statement that a morphism of $G_\bullet$-representations is a natural transformation of the action functors. We will repeat the proof here.
Let $\hat{g}_1 = \varrho_E(g), \hat{g}_2 = \varrho_{E'}(g)$. Naturality of $\Phi$ requires that for every arrow $g \in G_1$ with $s(g)=p$ and $t(g) = q$,
\[
    F_{E'}(g) \circ \Phi_{q} \;=\; \Phi_{p} \circ F_{E}(g),
\]
i.e., $\hat{g}_2^{-1} \circ \Phi_{q} = \Phi_{p}\circ \hat{g}_1^{-1}$. Composing on 
the right with $\hat{g}_1$ and on the left with $\hat{g}_2$ gives
\[
    \Phi_{q} \circ \hat{g}_1 \;=\; \hat{g}_2 \circ \Phi_{p}
    \qquad \forall\, g \in G_1,
\]
which is precisely $G_\bullet$-equivariance of $\{\Phi_p\}$.
\end{proof}

\subsection{General Case}
\label{subsec:GeneralCase}
Now we would like to reinterpret the previously defined convolution operations in terms of these categorical constructions. We will show that our constructions are a special case of the example in \cite[Appendix~B.1]{maruyama}.

\begin{thm}
\label{thm:conv-natural}
Suppose that $k$ is a Lie groupoid kernel for a LieGrpdConv layer (Definition~\ref{def:LieGrpdConv}). Then if $k$ has compact support on $X/\!/G_1$ and is $L^1$-integrable with respect to $\dd\lambda^{\rho(y)}(\tilde{g})\dd\mu([y])$, then the associated LieGrpdConv is a special case of a category convolution in the sense of \cite[Definition~5]{maruyama}, where the feature functors $F,F'$ are defined by trivial vector bundles $\mathbb{R}^{c_{\rm in}}\times G_1 \times (X/\!/G_1)$ and $\mathbb{R}^{c_{\rm out}}\times G_1 \times (X/\!/G_1)$ respectively.
\end{thm}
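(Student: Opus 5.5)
We prove the statement by exhibiting the data of a category convolution explicitly and then checking Maruyama's conditions one at a time. The goal is to show that LieGrpdConv is obtained from \cite[Definition~5]{maruyama} by suitable choices of the base spaces, fibers, transports, kernel and bias.

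\textbf{Step 1: the feature functors.} Following the special case \cite[Appendix~B.1]{maruyama}, we work over the action groupoid of $G_\bullet$ on the lifted space $G_1\times X/\!/G_1$, with the orbit label $[y]$ carried along as a parameter.

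For each object $p$ we take $\Omega(p)$ to be a point (or a compact piece of the fiber $t^{-1}(p)$ containing the support of $k$). We set $E_p=\mathbb{R}^{c_{\rm in}}$ and $E'_p=\mathbb{R}^{c_{\rm out}}$. The fiber transports $L_g$ are identities, since the bundles are trivial and the action only moves base points. The base transport $\tau_g$ is left multiplication by $g$ on the $G_1$-coordinate and leaves the orbit label fixed. Then $F(g)(f)=f\circ\tau_g$, and functoriality of $F,F'$ follows from associativity of $m$ and the unit axioms.

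Continuity of these assignments in the compact-open topology follows from smoothness of $m$ and $i$. A feature $f:G_1\times X/\!/G_1\to\mathbb{R}^{c_{\rm in}}$ is then the family $f_r(\tilde g)=f(\tilde g,[y])$ for $\tilde g\in t^{-1}(r)$.

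\textbf{Step 2: the category kernel.} We define
\[
K_{r\to p}(\tilde g, q) := k(\tilde g^{-1} q,[x],[y]),
\]
where $q\in t^{-1}(p)$ plays the role of the point $g$ in Definition~\ref{def:LieGrpdConv}, and $[x],[y]$ are the orbit labels attached to $p$ and $r$. We then check the required properties in turn.

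\emph{Carath\'eodory regularity.} This is immediate, because $k$ is continuous and composition and inversion are smooth.

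\emph{Uniform $L^1$ bound.} This follows from the hypothesis that $k$ has compact support on $X/\!/G_1$ and is $L^1$ with respect to $\dd\lambda^{\rho(y)}(\tilde g)\,\dd\mu([y])$. Left invariance of the Haar system makes the bound independent of the base point $p$.

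\emph{Integrated naturality.} Since the fiber transports are trivial, this reduces to the identity
\[
\int k(\tilde g^{-1}h^{-1}g)\,f(\tilde g)\,\dd\lambda(\tilde g)=\int k(\tilde g'^{-1}g)\,f(h\tilde g')\,\dd\lambda(\tilde g'),
\]
i.e.\ the substitution $\tilde g'=h^{-1}\tilde g$. This is precisely the computation in the proof of the equivariance proposition for LieGrpdLiftingConv and LieGrpdConv, and we will reuse that computation verbatim.

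\textbf{Step 3: the bias and matching the formulas.} The bias $b([x])$ depends only on the orbit, so it is invariant under every $F'(u)$. It is therefore a natural bias in the sense of Definition~\ref{def:naturalbias}.

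It remains to substitute $K$ and $F(g)(f_r)(\tilde\tau_g(q))$ into the category convolution formula. The integral over the incoming arrow bundle $I(p)=t^{-1}(p)$, together with the integral over the objects $r$ with respect to $\dd\mu$, should become the double integral $\int_{X/\!/G_1}\int_{t^{-1}(\rho(y))}$ of Definition~\ref{def:LieGrpdConv}. This identification uses the disintegration assumption on $\dd\mu$ stated in the preliminaries. Once the integrals match, continuity of $\Phi_p$ in the sup norm follows from the $L^1$ bound.

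\textbf{Main obstacle.} The delicate step is this bookkeeping of integration domains. Maruyama integrates over arrows into a single object $p$, whereas LieGrpdConv mixes distinct orbits $[x]\neq[y]$, which the special case without orbit mixing does not allow.

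To handle this, we first try to build the category on objects $G_0\times X/\!/G_1$, with arrows $(g,[y])$ and $[y]$ fixed along arrows, so that the measure $\dd\lambda\,\dd\mu$ on $I(p)\times X/\!/G_1$ is a category measure in Maruyama's sense. Orbit mixing is then placed entirely in the kernel's dependence on the pair $([x],[y])$. Equivariance is only required fiberwise over the orbit space, in line with the remark following the equivariance proposition.
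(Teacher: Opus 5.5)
There is a genuine gap. The data of Steps 1--2 do not reproduce $C_k$, and the repair suggested under ``Main obstacle'' cannot produce orbit mixing.

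With $\Omega(p)\subseteq t^{-1}(p)$ and $\tau_u$ given by left multiplication, the category convolution at $q\in\Omega(p)$ reads the source feature at $\tilde\tau_u(q)=u^{-1}q$, for $u\in I(p)=t^{-1}(p)$. Your kernel therefore yields the integrand $k(u^{-1}q,\dots)\,f(u^{-1}q,\dots)$, not $k(\tilde g^{-1}q,\dots)\,f(\tilde g,\dots)$. This is not a relabelling issue. As $u$ runs over $t^{-1}(p)$, the points $u^{-1}q$ sweep out the source fiber $s^{-1}(s(q))$, whereas $C_k\{f\}(q,[x])$ samples $f$ along the target fiber $t^{-1}(t(q))$. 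These coincide for a group but not for a groupoid, so no choice of $K_{r\to p}$ fixes it.

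Your choice of $\Omega$ is also inadmissible in Maruyama's framework:
\begin{itemize}
\item $t^{-1}(p)$ is generally noncompact;
\item a proper compact piece of it is not carried onto $\Omega(t(g))$ by left translation;
\item if $\Omega(p)$ is a point, there is no $q$ for the kernel to depend on.
\end{itemize}

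For orbit mixing: if arrows fix the label $[y]$, every incoming arrow at an object labelled $[x]$ has a source labelled $[x]$. The kernel is then only ever evaluated on the diagonal $[x]=[y]$, and the outer $\int_{X/\!/G_1}\dd\mu([y])$ never appears. Your integrated-naturality computation is correct, but it checks the LieGrpdConv formula itself, not the operator your data actually define.

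The paper places the variables differently.
\begin{itemize}
\item Its category is the action groupoid $H_\bullet$ of $G_\bullet$ on $G_1\times X/\!/G_1$, with objects $(g,[y])$. The $G_1$-argument of $f$ is supplied by the \emph{source object} $(\tilde g,[y])$ of an incoming arrow $(h,\tilde g,[y])$ with $h\tilde g=g$, not by a base transport.
\item It takes $\Omega(g,[y])=X/\!/G_1$ with identity base transports. The output orbit $[x]$ is then a point of $\Omega$, on which Maruyama's kernel is allowed to depend.
\item It restricts to features that are natural and constant on $\Omega$. The transported input then becomes $f|_{(\tilde g,[y])}$, and one can set $K=k(\tilde g^{-1}g,[x],[y])$.
\item It obtains the $\dd\mu([y])$ integral by taking the incoming-arrow bundle to be $I(g,[y])=t^{-1}(\rho(y))\times X/\!/G_1$.
\end{itemize}
The paper treats this last step rather informally. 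Still, it is exactly where orbit mixing has to enter, and your proposal has no working counterpart to it.
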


\begin{proof}
We will first complete the proof in the case where the bias is zero and then show that this trivially implies the general case.

We first note that the space $G_1 \times (X/\!/G_1)$ can itself be interpreted as a $G_\bullet$-space with anchor map $\rho:(g,[x]) \mapsto t(g)$, and action given by $a(h,(g,[x])) = (h\circ g,[x])$. The associated action groupoid is $G_1^{(2)}\times(X/\!/G_1)$ where $G_1^{(2)}$ is the space of compatible 2-tuples. We will denote this action groupoid by $H_\bullet$. An appropriate definition of a feature functor is then to take the base spaces to be $\Omega(g,[x]) = X/\!/G_1$, and take $E$ to be a $H_\bullet$-representation $E \to G_1 \times (X/\!/G_1)$, where $E$ is a trivial bundle with fiber $\mathbb{R}^{c_{\rm in}}$, and set $E_{(g,[x])}=E|_{(g,[x])}$. Then set,
\begin{align*}
    \tau_{(h,g,[x])} &= 1_{X/\!/G_1}\\
    \tilde{\tau}_{(h,g,[x])} &= 1_{X/\!/G_1}
\end{align*}
and let $L_{(h,g,[x])}= \varrho(h^{-1})$, where $\varrho$ is the representation morphism. Identically, we take $E'$ to be another $H_\bullet$-representation, where $E'$ is a trivial bundle with fiber $\mathbb{R}^{c_{\rm out}}$.
The arrow bundle is then given fiber-wise by $I(g,[y]) = t^{-1}(\rho(y))\times (X/\!/G_1)$. Next, a category kernel is a family of functions, $\{K_{(\tilde{g},[y])\to(g,[y])}\}$, with values $K_{(\tilde{g},[y])\to(g,[y])}((h,\tilde{g},[y]),[x])$. For this to be well-defined we require that $h\tilde{g} = g$, so that $h = g\tilde{g}^{-1}$. Also recall that a feature $f$ is a function $f : G_1\times X/\!/G_1 \to C^0(X/\!/G_1,\mathbb{R}^{c_{\rm in}})$. 
Applying \cite[Equation~3]{maruyama}, we have
\begin{align*}
    \mathcal{L}_K\{f\}_{(g,[y])}([x]) = \int_{I(g,[y])}K_{(\tilde{g},[y])\to(g,[y])}((h,\tilde{g},[y]),[x])(\varrho(\tilde{g}g^{-1})f|_{(g,[y])})|_{[x]}\dd\lambda^{\rho(y)}(\tilde{g})\dd\mu([y])
\end{align*}
Restricting to the case where each $f|_{(g,[y])}$ is an constant function, and so that the feature obeys $F(u)f|_{(g,[y])} = f|_{u^{-1}\circ (g,[y])}$ (i.e. is equivariant), we have
\begin{align*}
    \mathcal{L}_K\{f\}_{(g,[y])}([x]) = \int_{I(g,[y])}K_{(\tilde{g},[y])\to(g,[y])}((h,\tilde{g},[y]),[x])f|_{(\tilde{g},[y])}\dd\lambda^{\rho(y)}(\tilde{g})\dd\mu([y])
\end{align*}
where we can now identify
\[K_{(\tilde{g},[y])\to(g,[y])}((h,\tilde{g},[y]),[x]) = k(\tilde{g}^{-1}g,[x],[y])\]
where we note that $\tilde{g}^{-1}g = \tilde{g}^{-1}h\tilde{g}$. So, finally, we have
\begin{align*}
    \mathcal{L}_K\{f\}_{(g,[y])}([x]) = \int_{I(g,[y])}k(\tilde{g}^{-1}g,[x],[y])f|_{(\tilde{g},[y])}\dd\lambda^{\rho(y)}(\tilde{g})\dd\mu([y]) = C_k\{f\}(g,[x])
\end{align*}
as required. We notice that $\mathcal{L}_K\{f\}_{(g,[y])}([x])$ is independent of $[y]$, which is a consequence of our assumptions on the feature $f$.

The Carath\'eodory regularity condition and the $L^1$-integrability condition follows so long as $k$ is assumed to be an $L^1$-integrable function of compact support, and the integrated naturality condition follows simply from the left-invariance of the Haar system as mentioned in \cite[Appendix~B.1]{maruyama}. Finally, suppose that $b$ is a natural bias for $F'$ as in Definition \ref{def:naturalbias}. This means that $\varrho(\tilde{g}g^{-1})b(g,[x]) = b(\tilde{g},[x])$ for all $\tilde{g}$. It follows that $b$ is equivariant, and satisfies the requirements of a bias function as mentioned in Definition \ref{def:LieGrpdConv}. This completes the proof.
\end{proof}

\begin{thm}
Let $X_\bullet$ be the action groupoid for a $G_\bullet$-space $X$. Suppose that $k$ is a Lie groupoid kernel for a LieGrpdLiftingConv layer (Definition~\ref{def:LieGrpdLiftingConv}). 
Then if $k$ has compact support on $X/\!/G_1$ and is $\mathcal{L}^1$-integrable with respect to $\dd\lambda^{\rho(y)}(\tilde{g})\dd\mu([y])$, then the associated LieGrpdLiftingConv gives rise to a continuous natural transformation of feature functors $F_0 : X_\bullet^{\rm op}\to \mathsf{Vect}$ and $F'_0 : X_\bullet^{\rm op} \to \mathsf{Vect}$, defined as the whiskerings (or horizontal compositions) $F_0 = F\circ\mathcal{E}^{\rm op}$ and $F'_0 = F'\circ\mathcal{E}^{\rm op}$ of the feature functors of Theorem~\ref{thm:conv-natural} along the groupoid morphism $\mathcal{E} : X_\bullet \to H_\bullet$.
\end{thm}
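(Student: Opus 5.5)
The plan is to make the groupoid morphism $\mathcal{E}$ explicit, pull the feature functors of Theorem~\ref{thm:conv-natural} back along it, and then show that the LieGrpdLiftingConv is a whiskered LieGrpdConv.

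First, construct $\mathcal{E} : X_\bullet \to H_\bullet$. On objects it is the lift $x \mapsto (g_x,[x])$, where $g_x \in G_1$ is a chosen arrow with $g_x\cdot o_{[x]} = x$. On arrows it sends $(h,x)\in G_1\ltimes X$ to $(h, g_x, [x]) \in H_1$. I will check functoriality:
\begin{itemize}
\item Since $h\cdot x = h g_x \cdot o_{[x]}$, we may take $g_{h\cdot x} = h g_x$.
\item The target of $(h,g_x,[x])$ is then $(hg_x,[x]) = \mathcal{E}(h\cdot x)$.
\item Compositions are preserved because $m((h',h\cdot x),(h,x)) = (h'h,x)$ maps to $(h'h,g_x,[x])$, which is the composite in $H_\bullet$.
\end{itemize}
This step needs care, because $g_x$ is only determined up to the stabilizer of $o_{[x]}$. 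I would fix a (measurable) section of origins and arrows and accept that $\mathcal{E}$ depends on it. Equivalently, I would work with the chosen representatives as the paper does when it embeds $X\hookrightarrow G_1\times X/\!/G_1$. Since $F_0 = F\circ\mathcal{E}^{\rm op}$ and $F_0'=F'\circ\mathcal{E}^{\rm op}$ are composites of functors, they are automatically functors $X_\bullet^{\rm op}\to\mathsf{Vect}$ with the same fibers $\mathbb{R}^{c_{\rm in}}$, $\mathbb{R}^{c_{\rm out}}$ and base spaces $\Omega = X/\!/G_1$.

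Second, I reduce the lifting convolution to a LieGrpdConv. Given $f\in\mathcal{L}^1(X,\mathbb{R}^{c_{\rm in}})$, define the lifted feature $\hat f(\tilde g,[y]) := f(\tilde g\cdot y)$ on $G_1\times X/\!/G_1$, where $y$ is the orbit representative. Then Definition~\ref{def:LieGrpdLiftingConv} reads literally as $L_k\{f\} = C_k\{\hat f\}$, which is Definition~\ref{def:LieGrpdConv} applied to $\hat f$. Moreover $\hat f$ is equivariant in the sense used in the proof of Theorem~\ref{thm:conv-natural}, since $\hat f(u\tilde g,[y]) = f(u\cdot(\tilde g\cdot y))$. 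By Theorem~\ref{thm:conv-natural}, $C_k$ is a category convolution and hence a continuous natural transformation $\Phi: F\Rightarrow F'$. The hypotheses carry over directly: compact support on $X/\!/G_1$ and $L^1$-integrability of $k$ give Carath\'eodory regularity and the uniform $L^1$ bound.

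Third, I whisker. For any natural transformation $\Phi : F\Rightarrow F'$, the family $(\Phi\mathcal{E})_x := \Phi_{\mathcal{E}(x)}$ is natural for $F_0\Rightarrow F_0'$. This is because the naturality square for the arrow $(h,x)$ is exactly the naturality square of $\Phi$ at $\mathcal{E}(h,x)$. Continuity in the sup-norm holds componentwise, since each component is literally a component of $\Phi$. It then remains to identify the components: evaluating $(\Phi\mathcal{E})_x$ on the feature determined by $f$ gives $C_k\{\hat f\}(g_x,[x]) = L_k\{f\}(g_x,[x])$, which is the lifting convolution restricted along the lift of $x$. The bias is handled exactly as in Theorem~\ref{thm:conv-natural}: a natural bias for $F'$ whiskers to a natural bias for $F_0'$.

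The main obstacle is step one, namely making $\mathcal{E}$ a genuine functor despite non-free stabilizers and the choice of origins. If a global section of $g_x$ fails to be compatible with composition, I would instead replace $X_\bullet$ by the equivalent groupoid obtained by restricting to the chosen lifts. Alternatively, I would note that $L_k$ is invariant under right multiplication of $g_x$ by stabilizer elements, using the left-invariance of the Haar system as in the equivariance proposition.
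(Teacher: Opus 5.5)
Your route is the paper's: pull $F,F'$ back along $\mathcal{E}:X_\bullet\to H_\bullet$, whisker the convolution from Theorem~\ref{thm:conv-natural}, and read off continuity and the bias componentwise. Those steps are fine.

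The gap is where you suspected, in the arrow map of $\mathcal{E}$, and neither of your fallbacks closes it.

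\textbf{Why your arrow map fails.} $H_\bullet$ has trivial isotropy: $(h,g,[x])$ is an automorphism of $(g,[x])$ only if $hg=g$, i.e.\ $h=1_{t(g)}$. So every functor $X_\bullet\to H_\bullet$ must send a stabilizer arrow $(\sigma,x)$ with $\sigma\in G_1^x$ to an identity. Your rule sends it to $(\sigma,g_x,[x])$, which ends at $(\sigma g_x,[x])$. This differs from $(g_x,[x])$ unless $\sigma$ is a unit. Hence the coherent choice $g_{h\cdot x}=hg_x$ exists only for free actions, and the theorem does not assume freeness.

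\textbf{Why the fallbacks fail.} Restricting to the chosen lifts does not help. The full subgroupoid of $H_\bullet$ on those lifts still has trivial isotropy, so it is not equivalent to $X_\bullet$ when stabilizers are nontrivial. The statement is also about functors on $X_\bullet^{\rm op}$, not on a replacement. Nor does left-invariance of the Haar system make $L_k\{f\}(g_x\sigma,[x])$ independent of $\sigma\in G_1^{o_{[x]}}$. The right factor $\sigma$ enters only through the kernel argument $\tilde g^{-1}g_x\sigma$, so this would be an extra hypothesis on $k$. Even granting it, your arrow map would still not be functorial.

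\textbf{The missing idea.} The object map already determines the arrow map. Left multiplication preserves each source fiber $s^{-1}(p)$ and acts on it freely and transitively. All lifts $g_x$ of points in one orbit lie in $s^{-1}(J(o_{[x]}))$, so there is exactly one arrow of $H_\bullet$ from $(g_x,[x])$ to $(g_{x'},[x])$. Define
\[
\mathcal{E}(h,x)=\bigl(g_{h\cdot x}\,g_x^{-1},\;g_x,\;[x]\bigr).
\]
Its source and target are $\mathcal{E}(x)$ and $\mathcal{E}(h\cdot x)$. Composition and units are preserved by uniqueness of arrows, for an arbitrary (non-coherent, measurable) choice of lifts. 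This reduces to your $(h,g_x,[x])$ whenever $g_{h\cdot x}=hg_x$, e.g.\ for free actions, and sends stabilizer arrows to identities otherwise. With this $\mathcal{E}$, your whiskering, continuity and bias arguments go through verbatim.

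The paper's own choice, $\mathcal{E}(x)=(1_x,[x])$ and $\mathcal{E}(g,x)=(g,1_x,[x])$, fails the same target check. The arrow $(g,1_x,[x])$ ends at $(g,[x])$, not at $\mathcal{E}(g\cdot x)=(1_{g\cdot x},[x])$. So the paper does not supply this step either, and your origin-based object map is the right starting point.
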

\begin{proof}
    Consider the feature functors $F, F'$ described in the proof of Theorem \ref{thm:conv-natural}, and consider the morphism of groupoids $\mathcal{E}: X_\bullet \to H_\bullet$ given by $x \mapsto (1_x,[x])$ and $(g,x) \mapsto (g,1_x,[x])$. Then let $F_0 = F \circ \mathcal{E}^{\rm op}$ be the precomposition of $F$ with the opposite functor of $\mathcal{E}$, and similarly define $F'_0 = F'\circ \mathcal{E}^{\rm op}$. The precomposition allows us to replace $f(g,[x])$ in the definition of the LieGrpdLiftingConv with $f(g\cdot x)$ by identifying $g\cdot x$ with $(g,1_x,[x])\cdot (1_x,[x])$. It suffices to show that the family $(\mathcal{L}_{K})_{\mathcal{E}(x)}, x \in X$ defines a continuous natural transformation $F_0 \Rightarrow F'_0$. Indeed, the family $(\mathcal{L}_{K})_{\mathcal{E}(x)}, x \in X$ is called the whiskering, or horizontal composition, of the natural transformation $\mathcal{L}_K$ with $\mathcal{E}$ \cite{MacLane1978}. The only remaining thing is to show that it is continuous in the sense of \cite[Definition~3]{maruyama}. This follows immediately from the fact that the quotient map is continuous and the unit map $x \to 1_x$ is smooth. This completes the proof.
\end{proof}

\begin{thm}
Let $H_\bullet$ be the action groupoid of $G_\bullet$ on $G_1\times (X/\!/G_1)$ as in the preceding definitions.
The GrpdIGP (Definition~\ref{def:GrpdIGP}) defines a natural transformation from a feature functor $F : H_\bullet^{\rm op} \to \mathsf{Vect}$ to the constant feature functor $F_{\rm const} :H_\bullet^{\rm op} \to \mathsf{Vect}$ defined such that $F_{\rm const}(x) = C^0(H_0,\mathbb{R}^{\rm c_{final}})^{G_\bullet}\subset C^0(H_0,\mathbb{R}^{\rm c_{final}})$.
\end{thm}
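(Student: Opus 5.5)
The plan is to reuse the feature functor $F$ on $H_\bullet$ from the proof of Theorem~\ref{thm:conv-natural}: base spaces $\Omega(g,[x]) = X/\!/G_1$, trivial fiber $\mathbb{R}^{c_{\rm in}}$, identity base transports, and fiber transports $L_{(h,g,[x])} = \varrho(h^{-1})$. The target $F_{\rm const}$ will be realised as a feature functor with the same identity base transports, fiber $\mathbb{R}^{c_{\rm final}}$, and \emph{trivial} fiber transports. Then $F_{\rm const}(u)$ is the identity on $C^0(H_0,\mathbb{R}^{c_{\rm final}})^{G_\bullet}$ for every arrow $u$ of $H_\bullet$. I will define the components of the candidate natural transformation
\[
\Phi_{(g,[x])}(f) \;=\; \mathrm{GrpdIGP}\{f\},
\]
where $f$ ranges over features on the lifted space that are equivariant in the sense used in Theorem~\ref{thm:conv-natural} (constant on each $\Omega$, with $F(u)f|_{(g,[y])} = f|_{u^{-1}\circ(g,[y])}$). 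With this choice $\Phi_{(g,[x])}$ does not depend on the point $(g,[x])$ of $H_0$ at all, except through the fiber $t^{-1}(\rho(x))$ over which we integrate.

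The steps, in order, are as follows.

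\textbf{Step 1: the value of $\Phi$ is $G_\bullet$-invariant.} For $h\in G_1$ with composable source and target, I substitute $g' = h g$ in
\[
\int_{t^{-1}(\rho(x))} f(g,[x])\,\dd\lambda^{\rho(x)}(g).
\]
The left-invariance property of the Haar system, exactly as in the proof of the equivariance Proposition for LieGrpdLiftingConv, then shows that the integral of $f\circ(h^{-1}\cdot)$ equals that of $f$. This is where the integral over the whole $t$-fiber absorbs the action. The bias $b([x])$ depends only on the orbit and is automatically invariant. Hence $\Phi_{(g,[x])}(f)$ lies in $C^0(H_0,\mathbb{R}^{c_{\rm final}})^{G_\bullet}$.

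\textbf{Step 2: naturality.} For an arrow $u=(h,g,[x])$ of $H_\bullet$, the required square is $F_{\rm const}(u)\circ\Phi_{t(u)} = \Phi_{s(u)}\circ F(u)$. Since $F_{\rm const}(u)$ is the identity, this reduces to
\[
\mathrm{GrpdIGP}\{F(u)f\} \;=\; \mathrm{GrpdIGP}\{f\}.
\]
Because $F(u)f$ is just $f$ translated by $u^{-1}$ in the $G_1$ slot (the base transport being the identity), this is exactly the invariance established in Step 1.

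\textbf{Step 3: continuity.} I will show each $\Phi_p$ is continuous in the sup-norm topology. Compact support of the features, together with the continuity requirement in the definition of a Haar system, gives
\[
\|\Phi_p f\|_\infty \;\le\; \lambda^{\rho(x)}(\operatorname{supp})\,\|f\|_\infty,
\]
so $\Phi_p$ is a bounded linear map up to the constant bias; the affine part is handled as in Theorem~\ref{thm:conv-natural}.

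The main obstacle is Step 1 in the non-transitive case. The translation $g\mapsto hg$ maps $t^{-1}(s(h))$ to $t^{-1}(t(h))$, so invariance only compares integrals over \emph{different} fibers over points of the same $G_\bullet$-orbit. I would handle this as the Remark following the equivariance Proposition does: interpret invariance fiberwise over $X/\!/G_1$. Concretely, the constant functor's value at $(g,[x])$ is identified across all points of one orbit via the Haar-system identity $\int f\,\dd\lambda^{s(g')} = \int f(g'\cdot)\,\dd\lambda^{t(g')}$. If this identification fails to be canonical, I would instead shrink $F_{\rm const}$ to functions that are constant along orbits, which is precisely what the superscript $G_\bullet$ denotes.
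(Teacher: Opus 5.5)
Your core argument (Steps 1--2) is sound, and it reaches the result by a different route from the paper.

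The paper never writes down a naturality square for GrpdIGP. It reuses the setup of Theorem~\ref{thm:conv-natural} and chooses the kernel $k(\tilde{g}^{-1}g,[x],[y])=\delta_{[x]}$, constant in the arrow variable and a Dirac delta in the orbit variable. The sifting property then collapses the resulting category convolution to $b([x])+\int_{t^{-1}(\rho(x))}f(\tilde{g},[x])\,\mathrm{d}\lambda^{\rho(x)}(\tilde{g})$. Naturality is thus inherited from the category-convolution framework, whose integrated naturality comes from left-invariance of the Haar system.

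You instead check the square $F_{\rm const}(u)\circ\Phi_{t(u)}=\Phi_{s(u)}\circ F(u)$ directly. With trivial transports on $F_{\rm const}$ it is literally the identity $\int_{t^{-1}(s(h))}f(hg)\,\mathrm{d}\lambda^{s(h)}(g)=\int_{t^{-1}(t(h))}f(g)\,\mathrm{d}\lambda^{t(h)}(g)$. Both proofs rest on the same fact. The paper's route buys uniformity: GrpdIGP becomes one more instance of a category convolution, so one theorem covers every layer. Your route has three advantages:
\begin{itemize}
\item it is self-contained;
\item it avoids feeding a distributional kernel, constant along possibly non-compact $t$-fibres, into a theorem stated for compactly supported $L^1$ kernels (the paper's reduction tacitly needs $\mu$ on $X/\!/G_1$ to have atoms and the $t$-fibres to have finite Haar measure);
\item it actually verifies the square against $F_{\rm const}$, which the paper leaves implicit.
\end{itemize}

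Two things need tidying.

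\emph{The ``main obstacle'' in your last paragraph.} This is not an obstacle for naturality. The square compares the component at $s(u)$ with the component at $t(u)$, so integrals over different $t$-fibres are exactly what should appear, and left-invariance relates them. Fibre-dependence only matters for your Step~1 claim that each output is a $G_\bullet$-invariant function on $H_0$. That claim follows from your restriction to equivariant features, provided the fibre transport $\varrho$ is trivial. Step~2 tacitly assumes this when it treats $F(u)$ as a pure translation. For nontrivial $\varrho$ the two sides of the square differ by the fibre transport, just as GIGP is only invariant for scalar features. Shrinking $F_{\rm const}$ would not help, since it restricts the codomain rather than the values of $\Phi$.

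\emph{Step~3.} It is not needed, since the statement asserts only naturality. As written, it also does not give sup-norm continuity. The constant $\lambda^{\rho(x)}(\operatorname{supp} f)$ depends on $f$, and on a $t$-fibre of infinite Haar measure integration is not sup-norm continuous. For example, bumps of height $1/n$ and width $n$ on $\mathbb{R}$ tend to $0$ uniformly but have integral near $1$. Continuity would require finite-measure fibres, as with the normalised Haar measure in GIGP.
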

\begin{proof}
    Consider the exact same setup as in the proof of Theorem \ref{thm:conv-natural}, except now we will define the kernel to be given by
    \[k(\tilde{g}^{-1}g,[x],[y]) = \delta_{[x]}\]
    where $\delta_{[x]}$ is the Dirac delta distribution supported on the orbit $[x]$. In other words, it is the Radon-Nikodym derivative of the Dirac delta measure with respect to the measure $\dd\mu([y])$. Given this, we have

    \begin{align*}
    \mathcal{L}_\delta\{f\}_{(g,[y])}([x]) &= b([x]) + \int_{I(g,[y])}\delta_{[x]}f|_{(\tilde{g},[y])}\dd\lambda^{\rho(y)}(\tilde{g})\dd\mu([y]) \\
    &= b([x]) + \int_{t^{-1}(\rho(x))} f|_{(\tilde{g},[x])}\dd\lambda^{\rho(x)}(\tilde{g})\\
    &= \textrm{GrpdIGP}\{f\}([x])
    \end{align*}
    where we have used the left-invariance of the Haar system in applying the sifting property to $\dd\lambda^{\rho(x)}$. 
\end{proof}

Taken together, the full architecture defines a diagram of natural transformations
\[
    F_{\mathrm{in},0} 
    \xRightarrow{\;\mathcal{L}_{K_0}\circ \mathcal{E}^{\rm op}\;}\; 
    F^{(c_1)} 
    \xRightarrow{\;\mathcal{L}_{K_1}\circ \mathcal{E}^{\rm op}\;}\; 
    \cdots 
    \xRightarrow{\;\mathcal{L}_{K_N}\circ \mathcal{E}^{\rm op}\;}
    F^{(c_N)} 
    \xRightarrow{\;\mathrm{GrpdIGP}\circ \mathcal{E}^{\rm op}\;} 
    F_{\mathrm{const},0}
\]
with each arrow a natural transformation. Here, $F^{(c_i)}$ are feature functors corresponding to intermediate layers of the neural network. We note that by lifting from $X_\bullet$ to $H_\bullet$ at the start, all successive natural transformations are essentially restricted to the image of the embedding $\mathcal{E}$ in $H_\bullet$, i.e. all feature functors are precomposed with $\mathcal{E}$. The 
$G_\bullet$-equivariance of the network as a whole is then a formal consequence of 
functoriality of composition of natural transformations. 
\begin{rem}
    We have not yet mentioned activation functions. As shown by Maruyama, any globally Lipschitz pointwise activation function can be used to define what is called a scalar-gated nonlinearity, which is a specific type of continuous natural transformation and thus does not break equivariance \cite[Appendix A Lemma 1]{maruyama}.
\end{rem}
\begin{rem}
    In this section we have not addressed the Lie algebroid parameterization introduced in section \ref{sec:algebroid_kernel}. A Lie algebroid is not readily interpreted as a category the same way that a groupoid is. However, as mentioned in the introduction, a groupoid may be interpreted as giving rise to a simplicial complex satisfying certain conditions that allow it to be interpreted using the language of higher homotopy theory and higher category theory. In particular a far-reaching generalization of Lie groupoids can be seen in Lie $n$-groupoids and Lie $\infty$-groupoids. Taking this perspective, amidst the folklore \cite{nlabalgebroid} (see Harel's 1980 article `On Folk Theorems' \cite{Harel1980} for a discussion of folklore in mathematics and computer science), there is a higher categorical interpretation of Lie algebroids as a special case of Lie $\infty$-algebroids \cite{Bruce2011}, which are supposedly Lie $\infty$-groupoids that are said to be only infinitesimally extended over their base space. That is, they can be seen as a special type of $\infty$-category, using the simplicial model of higher category theory. In particular, within this folklore interpretation, some authors argue that Lie ($1$-)algebroids should be seen as differential $\infty$-groupoids constructed from so-called infinitesimal simplices introduced by Kock \cite{Kock2009}. Recent peer-reviewed work instead presents them as simplicial objects in supermanifolds \cite{aintablian2025differentiable,AintablianThesis}. Therefore, it is unclear how to interpret Lie algebroid-equivariant convolutional neural networks in terms of Maruyama's category convolutions, and we leave this for future research.
\end{rem}

\section{Acknowledgements}
M. R. Astwood acknowledges funding as a PhD Candidate of the University of Manitoba Department of Mathematics via the GETS program, the University of Manitoba Graduate Fellowship, and financial support from supervisor Dr. Derek Krepski. M. R. Astwood also acknowledges helpful discussions with Fran\c{c}ois-Guillaume Lemesre (Senior Machine Learning Engineer, Descartes Systems Group).

\bibliography{bib}
\end{document}